\documentclass[11pt]{article}

% This is my basic toolkit; things I will always want.

% First, some packages:
\usepackage{epsfig}
\usepackage{graphicx}
\usepackage{amsbsy}
\usepackage{amsmath}
\usepackage{amsfonts}
\usepackage{amssymb}
\usepackage{textcomp}
\usepackage{hyperref}
\usepackage{aliascnt}

% Then, some commands I'll need:
% Nicked  from Leinster:
\newcommand{\mcm}[3]{\newcommand{#1}[#2]{{\ensuremath{#3}}}} 

\mcm{\tuple}{1}{\langle #1 \rangle}
\mcm{\name}{1}{\ulcorner #1 \urcorner}
\mcm{\Nbb}{0}{\mathbb{N}}
\mcm{\Zbb}{0}{\mathbb{Z}}
\mcm{\Rbb}{0}{\mathbb{R}}
\mcm{\Cbb}{0}{\mathbb{C}}
\mcm{\Qbb}{0}{\mathbb{Q}}
\mcm{\Acal}{0}{\cal A}
\mcm{\Bcal}{0}{\cal B}
\mcm{\Ccal}{0}{\cal C}
\mcm{\Dcal}{0}{\cal D}
\mcm{\Ecal}{0}{\cal E}
\mcm{\Fcal}{0}{\cal F}
\mcm{\Gcal}{0}{\cal G}
\mcm{\Hcal}{0}{\cal H}
\mcm{\Ical}{0}{\cal I}
\mcm{\Jcal}{0}{\cal J}
\mcm{\Kcal}{0}{\cal K}
\mcm{\Lcal}{0}{\cal L}
\mcm{\Mcal}{0}{\cal M}
\mcm{\Ncal}{0}{\cal N}
\mcm{\Ocal}{0}{{\cal O}}
\mcm{\Pcal}{0}{{\cal P}}
\mcm{\Qcal}{0}{{\cal Q}}
\mcm{\Rcal}{0}{{\cal R}}
\mcm{\Scal}{0}{{\cal S}}
\mcm{\Tcal}{0}{{\cal T}}
\mcm{\Ucal}{0}{{\cal U}}
\mcm{\Vcal}{0}{{\cal V}}
\mcm{\Wcal}{0}{{\cal W}}
\mcm{\Xcal}{0}{{\cal X}}
\mcm{\Ycal}{0}{{\cal Y}}
\mcm{\Zcal}{0}{{\cal Z}}
\mcm{\Mfrak}{0}{\mathfrak M}

\mcm{\restric}{0}{\upharpoonright}
\mcm{\upset}{0}{\uparrow}
\mcm{\onto}{0}{\twoheadrightarrow}
\mcm{\smallNbb}{0}{{\small \mathbb{N}}}
\DeclareMathOperator{\preop}{op}
\mcm{\op}{0}{^{\preop}}

\newcommand{\se}{\subseteq}
%I like this environment, which is one of Leinster's:
%
{\begin{array}{c}
\setlength{\unitlength}{1em}}%
{\end{array}}

%Finally, I'll want the following theorem structure:
\usepackage{amsthm}

\newcommand{\theoremize}[2]{\newaliascnt{#1}{thm} \newtheorem{#1}[#1]{#2} \aliascntresetthe{#1}}

\theoremstyle{plain}
\newtheorem{thm}{Theorem}[section]
\theoremize{lem}{Lemma}
\theoremize{skolem}{Skolem}
\theoremize{fact}{Fact}
\theoremize{sublem}{Sublemma}
\theoremize{claim}{Claim}
\theoremize{obs}{Observation}
\theoremize{prop}{Proposition}
\theoremize{cor}{Corollary}
\theoremize{que}{Question}
\theoremize{oque}{Open Question}
\theoremize{con}{Conjecture}

\theoremstyle{definition}
\theoremize{dfn}{Definition}
\theoremize{rem}{Remark}
\theoremize{eg}{Example}
\theoremize{exercise}{Exercise}
\theoremstyle{plain}

\usepackage{verbatim}
\usepackage{enumerate}
\usepackage[all]{xy}

\usepackage{subfig}

\title{Embedding simply connected \\ 2-complexes in 3-space\\ \Large I. A Kuratowski-type 
characterisation}

\author{Johannes Carmesin
\medskip 
\\
  {University of Birmingham}
}
\newcommand{\sm}{\setminus}

\DeclareMathOperator{\Sbb}{\mathbb{S}}

\newcommand{\Sthree}{$\Sbb^3$}

\mcm{\Fbb}{0}{\mathbb{F}}

\usepackage{xr}
\externaldocument{3space_PL_emb190807}

\begin{document}

\maketitle

\begin{abstract}
We characterise the embeddability of simply connected locally 3-connected 2-dimensional simplicial 
complexes in 3-space in a way analogous to Kuratowski's characterisation of graph planarity, by 
excluded minors. This answers questions of Lov\'asz, Pardon and Wagner.
\end{abstract}

\section{Introduction}

In 1930, Kuratowski proved that a graph can be embedded in the plane if and only if it has none of 
the two non-planar graphs $K_5$ or $K_{3,3}$ as a minor\footnote{A \emph{minor} of a graph is 
obtained by deleting or contracting edges.}. The main result of this paper may be regarded as a 
3-dimensional analogue of this theorem. 

\vspace{.3cm}

Kuratowski's theorem gives a way how embeddings in the plane could be understood through the 
minor relation. A far reaching extension of Kuratowski's theorem is the Robertson-Seymour theorem 
\cite{MR2099147}. Any minor closed class of graphs is characterised by the list of minor-minimal 
graphs not 
in the class. This theorem says that this list always must be finite. 
The methods developed to prove this theorem are nowadays used in many results in the 
area of structural graph theory \cite{DiestelBookCurrent} -- 
and beyond; recently Geelen, Gerards and Whittle extended the Robertson-Seymour theorem  
to representable matroids by 
proving  Rota's conjecture \cite{solving_Rota}. 
Very roughly, the Robertson-Seymour structure theorem establishes a correspondence between minor 
closed classes of graphs and classes of graphs almost embeddable in 2-dimensional surfaces.

In his survey on the Graph Minor project of Robertson and Seymour \cite{lovasz_gm_survey}, in 2006 
Lov\'asz asked 
whether there is a 
meaningful analogue of the minor relation in three dimensions. Clearly, every graph can be 
embedded in 3-space\footnote{Indeed, embed the vertices in general position and embed the edges 
as straight lines. }.

One approach towards this question is to restrict the embeddings in question, and just consider so 
called linkless embeddings of graphs, see \cite{linkless_emb_survey} for a survey. Instead of 
restricting embeddings, one could also 
put some additional structure on the graphs in question. Indeed, Wagner  asked how an analogue of 
the minor relation could be defined on general simplicial complexes \cite{Wagner_minor}. 

Unlike in higher dimensions, a 2-dimensional simplicial complex has a topological embedding in 
3-space if and only if it has a piece-wise linear embedding if and only if it has a differential 
embedding \cite{{Bin59},{Hatcher3notes},{moise},{Pap43}}. In 
\cite{mstw_3sphere_decidable}, Matou\v sek, Sedgewick, Tancer and Wagner 
proved that 
the embedding problem for 2-dimensional simplicial complexes in 3-space is 
decidable. In August 2017,
de Mesmay, Rieck, Sedgwick and Tancer complemented this result by showing that this problem is 
NP-hard \cite{s3np_hard}. 

This might suggest that if we would like to get a structural characterisation of embeddability, we 
should work inside a subclass of 2-dimensional simplicial complexes. And in fact such 
questions have 
been asked: in 2011 at the internet forum `MathsOverflow' Pardon\footnote{John Pardon confirmed in 
private communication that he asked that question as the user `John Pardon'.} asked 
whether there are necessary and sufficient conditions for when contractible 2-dimensional 
simplicial complexes embed in 3-space. The \emph{link graph} at a vertex $v$ of a 
simplicial complex 
is the incidence graph between edges and faces incident with $v$.
He notes that if embeddable the link graph at any vertex must be planar. This leads to obstructions 
for embeddability such as the cone over the complete graph $K_5$, see 
\autoref{cK5}. -- But there are different 
obstructions of a more global character, see \autoref{8obs}. All their link graphs are planar -- 
yet they are not embeddable.
   \begin{figure} [htpb]   
\begin{center}
   	  \includegraphics[height=4cm]{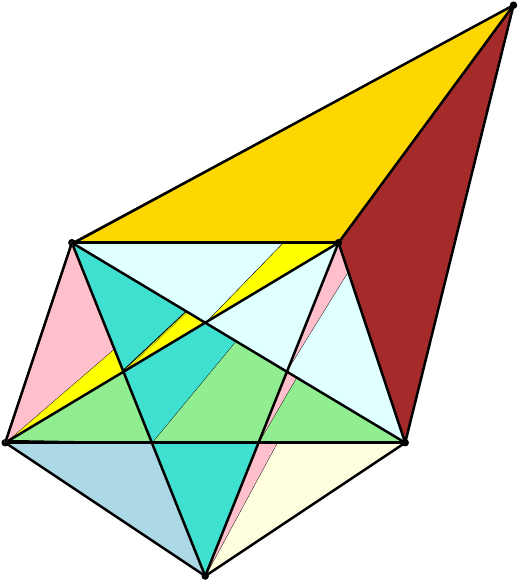}
   	  \caption{The cone over $K_5$. 
Similarly as the graph $K_5$ does not embed in 2-space, the cone over 
$K_5$ does not embed in 3-space. }\label{cK5}
\end{center}
   \end{figure}

   \begin{figure} [htpb]   
\begin{center}
   	  \includegraphics[height=4cm]{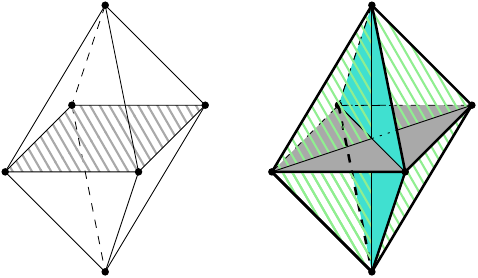}
   	  \caption{The octahedron obstruction, depicted on the right, is obtained from the 
octahedron with its eight triangular faces by 
adding 3 more faces of size 4 orthogonal to the three axis. If we add just one of these 
4-faces 
to the octahedron, the resulting 2-complex is embeddable as illustrated on the left. A 
second 4-face could be added on the outside of that depicted embedding. However, it can 
be shown that the octahedron with all three 4-faces is not embeddable.}\label{8obs}
\end{center}
   \end{figure}

\vspace{.3cm}

Addressing these questions, we introduce an analogue of the minor relation 
for 2-complexes and we use it to prove a 3-dimensional analogue of 
Kuratowski's theorem characterising when simply connected 2-dimensional simplicial complexes 
(topologically) embed
in 3-space.

\begin{figure}
\begin{center}

    \begin{tabular}{ | l | l  | l |}
    \hline
    {} & delete & contract  \\ \hline
    edge & {} &{} \\ \hline
    face & {}&{} \\ \hline
    \end{tabular} 
    \caption{For each of the four corners of the above diagram we have one space minor 
operation.}\label{4ops}
\end{center}
\end{figure}

More precisely, a \emph{space minor} of a 2-complex is obtained by successively 
deleting or contracting edges or faces, and splitting vertices. 
See \autoref{4ops} and \autoref{fig:space_minor2}. The precise details of these definitions are 
given in \autoref{sec:space}; for example contraction of edges is only allowed for edges that 
are not loops\footnote{\emph{Loops} are edges that have only a single endvertex. While 
contraction of edges that are not loops clearly preserves embeddability in 3-space, for loops this 
is not always the case.} and we only contract faces of size at most two.
   \begin{figure} [htpb]   
\begin{center}
   	  \includegraphics[height=3cm]{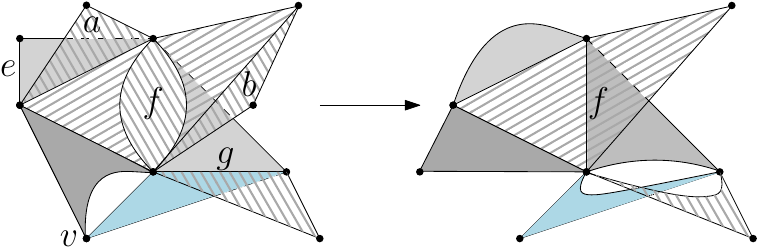}
   	  \caption{The complex on the right is a space minor of the complex on the left. Indeed, 
for that just delete the faces labelled $a$ and $b$, contract the edge $e$ and contract the face 
$f$, and delete the edge $g$ and split the vertex $v$.}\label{fig:space_minor2}
\end{center}
   \end{figure}

It will be quite easy to see that space minors preserve embeddability in 3-space and that this 
relation 
is well-founded. The operations of face deletion and 
face contraction correspond to the minor operations in the dual matroids of simplicial complexes in 
the sense of \cite{3space4}. 

\begin{eg}
 Using space minors, we can understand why the Octahedron Obstruction (\autoref{8obs}) does not 
embed in 3-space. Indeed, we contract an arbitrary face of size three to a single vertex (formally, 
we first contract an edge of that face, then it gets size two. So we can contract it to an edge. 
Then we contract that edge to a vertex). It turns out that the link graph at the new vertex is the 
non-planar graph $K_{3,3}$. Thus this space minor is not embeddable in 3-space. As space minors 
preserve embeddability, we deduce that the Octahedron Obstruction cannot be embeddable. 
\end{eg}

A construction of a simply connected 2-complex that is not embeddable in 3-space and has no space 
minor with a non-planar link graph is described in \autoref{other_construction}.
   \begin{figure} [htpb]   
\begin{center}
   	  \includegraphics[height=3cm]{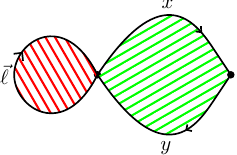}
   	  \caption{We start the construction with the depicted 2-complex consisting of two 
vertices joined by the two parallel edges $x$ and $y$ and a loop $\ell$ attached at one of these 
vertices. It has two faces, one bounded by $\ell$, and the other by $x$ and $y$. Now we add two 
more faces of size three: $xy\protect\overrightarrow{\ell}$ and $xy\protect\overleftarrow{\ell}$. 
They have both the same edge set but they traverse the loop $\ell$ in different directions. It can 
be shown that the resulting 2-complex does not embed in 3-space.}\label{other_construction}
\end{center}
   \end{figure}
The main result of this paper is the following.

\begin{thm}\label{kura_intro}
Let $C$ be a simply connected locally 3-connected 2-dimensional simplicial complex. The following 
are equivalent.
\begin{itemize}
 \item $C$ embeds in 3-space;
 \item $C$ has no space minor from the finite list $\Zcal$.
\end{itemize}
\end{thm}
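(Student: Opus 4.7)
The forward direction (embeddability implies no space minor in $\Zcal$) should decompose as two ingredients. First, I would check that each of the five operations (deletion of an edge, deletion of a face, contraction of a non-loop edge, contraction of a face of size $\leq 2$, and vertex splitting) preserves topological embeddability in $\Sbb^3$; the restriction to size-$\leq 2$ faces and non-loop edges is precisely what keeps each operation topologically innocuous, so these verifications are routine. Second, each complex in $\Zcal$ must itself be shown non-embeddable; for the prototypical members, like the cone over $K_5$ and the octahedron obstruction, this follows from the link-graph planarity condition of Pardon together with the examples already given in \autoref{cK5} and \autoref{8obs}.

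The converse is the substantive half. The plan is to run a constructive embedding procedure on $C$ that either produces an embedding into $\Sbb^3$ or extracts a space minor in $\Zcal$. After normalising $C$ (contracting trivial faces, cleaning up using the edge operations and vertex splitting), the first step is local: at each vertex $v$, inspect the link graph. Local $3$-connectivity of $C$ forces these links to be $3$-connected, so by Kuratowski's theorem each link is either planar or contains $K_5$ or $K_{3,3}$ as a minor. In the non-planar case, a cone-type pullback yields a space minor of $C$ that reduces to a canonical obstruction to be placed in $\Zcal$ (the local part of the list).

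If every link is planar, then by Whitney's uniqueness theorem each link carries a unique combinatorial embedding, giving a canonical rotation system at every vertex of $C$. The remaining question is global: do these local embeddings assemble into a topological embedding of $C$ into $\Sbb^3$? This is where the obstruction of \autoref{other_construction} lives. Here I would exploit simple connectivity to kill the primary obstructions to gluing, and interpret the remaining obstructions matroidally via the dual matroid framework referred to in \cite{3space4}: a global embedding exists if and only if a certain matroid derived from $C$ is representable over the appropriate structure, and failure of representability should produce, by reduction via space-minor operations, one of finitely many forbidden configurations.

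The hardest part will be establishing finiteness of the list $\Zcal$ and bounding the size of minimal global obstructions. For ordinary graph planarity this finiteness is essentially automatic from Euler's formula plus $3$-connectivity, but no analogous bound is apparent for $2$-complexes, and I expect the proof to require the matroid-theoretic structural results of \cite{3space4} both to reduce a minimal counterexample to bounded complexity and to translate non-representability back into a finite excluded-minor list. A secondary technical obstacle is setting up the space minor operations so that they cooperate with the simply connected and locally $3$-connected hypotheses: in particular, vertex splitting must be defined carefully so that it neither destroys simple connectivity nor creates links that violate $3$-connectivity, and edge/face contractions must preserve the topological type up to embeddability.
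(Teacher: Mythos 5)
Your forward direction matches the paper in spirit, though note that the members of $\Zcal$ are not complexes like the octahedron obstruction but rather (looped) generalised cones over $K_5$, $K_{3,3}$, or over link graphs carrying one of finitely many non-planar ``marked graph'' configurations; the paper shows they are non-embeddable not by direct topological argument but by showing they admit no planar rotation system, combined with the fact (\autoref{rot_closed_down}) that existence of a planar rotation system is preserved by space minors. The substantive gap is in your converse. The paper's entire strategy is to replace embeddability by the combinatorial condition of having a \emph{planar rotation system} (\autoref{emb_to_rot}, imported from \cite{3space2}), and the heart of the argument is \autoref{rot_system_exists}: a locally 3-connected complex has a planar rotation system provided that $C$ itself, every single-edge contraction $C/e$, and every contraction $C/(o-e)$ of a chordless cycle minus an edge are \emph{locally planar}. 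This is proved by a parity argument --- since 3-connected planar link graphs have unique embeddings up to reflection, one two-colours the edges of $C$ according to whether adjacent rotators agree or are reversed, and shows along a spanning tree that all edges can be made ``green,'' using an even-parity lemma on chordless cycles. Your proposal has no analogue of this step: you gesture at ``killing primary obstructions to gluing via simple connectivity'' and then at dual-matroid representability, but the matroid framework belongs to \cite{3space4} and the non-simply-connected setting; it is not what closes the argument here, and as stated your global assembly step does not identify which finitely many contractions must be inspected.

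A second concrete gap: when one contracts $o-e$ for a chordless cycle $o$, the resulting vertex carries a loop, and the relevant condition on its link graph is not mere planarity but \emph{loop-planarity} --- the rotators at the two link-graph vertices of the loop must be reverse to one another. Capturing the failure of this condition by a bounded-size space minor is exactly what the machinery of marked graphs, the explicit four-element list $\Xcal$, the twelve-element list $\Ycal$, and the strict marked minors $\Ycal'$ is for, together with the looped generalised cone construction that pulls a marked-graph obstruction in a link back to a space minor of $C$. Your proposal omits this entirely. Relatedly, you predict that finiteness of $\Zcal$ is the hardest part and will need matroid theory; in fact it follows from an elementary (if fiddly) enumeration showing every marked-minor-minimal 3-connected marked graph has at most five vertices (\autoref{cor100}), so the finiteness is established by explicit computation rather than by any compactness or representability argument.
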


The finite list $\Zcal$ is defined explicitly in \autoref{sec:gen_cone} below. 
The members of $\Zcal$ are grouped in six natural classes. 
Here a (2-dimensional) simplicial complex is \emph{locally 
3-connected} if all its link graphs are connected and do not contain 
separators of size one or two.
In \cite{3space5}, we extend \autoref{kura_intro} to simplicial complexes that need not be 
locally 
3-connected. For general simplicial complexes, not necessarily simply connected 
ones, the proof implies that a locally 3-connected simplicial complex has an embedding into some 
oriented 3-manifold if and only if it does not have a minor from $\Lcal$. 
\vspace{.3cm}

We are able to extend \autoref{kura_intro} from simply connected 
simplicial complexes to those whose first homology group is trivial.

\begin{thm}\label{kura_intro_hom}
Let $C$ be a locally 3-connected 2-dimensional simplicial complex such that the first homology 
group $H_1(C,\Fbb_p)$ is trivial for some prime $p$. The following are equivalent.
\begin{itemize}
 \item $C$ embeds in 3-space;
 \item $C$ is simply connected and has no space minor from 
the finite list $\Zcal$.
\end{itemize}
\end{thm}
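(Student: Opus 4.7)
The backward implication is immediate from \autoref{kura_intro}: if $C$ is simply connected and has no space minor from $\Zcal$, then, $C$ being locally 3-connected, \autoref{kura_intro} yields that $C$ embeds in 3-space (the hypothesis on $H_1(C,\Fbb_p)$ is unused in this direction). So the real content is the forward implication, for which one assumes $C$ embeds in $\Sbb^3$ and argues that $C$ both avoids every space minor in $\Zcal$ and is simply connected.

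That $C$ has no space minor from $\Zcal$ is routine: space minor operations preserve embeddability in 3-space and, as established in the course of proving \autoref{kura_intro}, the members of $\Zcal$ are themselves not embeddable in $\Sbb^3$. The less obvious claim is simple connectedness of $C$, and the plan has two stages: first promote the mod-$p$ hypothesis to integral acyclicity, and then promote integral acyclicity to triviality of $\pi_1$.

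For the first stage, Alexander duality applied to the (tame) embedding $C\hookrightarrow\Sbb^3$ yields $H^{2}(C;\Zbb)\cong\tilde{H}_{0}(\Sbb^3\setminus C;\Zbb)$, which is free abelian. The universal coefficient theorem gives
\[
H^{2}(C;\Zbb)\cong \operatorname{Hom}(H_{2}(C;\Zbb),\Zbb)\oplus \operatorname{Ext}(H_{1}(C;\Zbb),\Zbb),
\]
and the Ext summand is exactly the torsion subgroup of $H_1(C;\Zbb)$. Being a summand of a free abelian group, this torsion subgroup vanishes, so $H_1(C;\Zbb)\cong\Zbb^{r}$ for some $r\ge 0$. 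Combined with the hypothesis $H_1(C;\Fbb_p)=0$, which via another application of universal coefficients reduces to $\Zbb^r\otimes\Fbb_p=\Fbb_p^r=0$, one obtains $r=0$ and hence $H_1(C;\Zbb)=0$.

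The main obstacle is the second stage: promoting $H_1(C;\Zbb)=0$ to $\pi_1(C)=0$. At this point $\pi_1(C)$ is only known to be perfect, and this is where both the embedding into $\Sbb^3$ and the local 3-connectedness must be used in earnest. The plan is to pass to a regular neighbourhood $N\subseteq\Sbb^3$ of $C$, which is a compact 3-manifold with $\pi_1(N)\cong\pi_1(C)$ and $H_1(N;\Zbb)=0$, and then to exploit 3-manifold topology, combined with the combinatorial control that the 3-connected link graph condition provides on how $\partial N$ is assembled around each vertex of $C$, in order to conclude that $\pi_1(N)$ is trivial. An alternative route is to perform a sequence of space minor reductions that preserve both embeddability and the vanishing of $H_1(\,\cdot\,;\Fbb_p)$ until one reaches a complex to which \autoref{kura_intro} can be invoked directly. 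I expect either approach to be the essential new technical input beyond \autoref{kura_intro}.
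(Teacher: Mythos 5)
Your backward implication and the ``no space minor from $\Zcal$'' half of the forward implication are fine, and you have correctly located the crux: deducing simple connectedness of $C$ from embeddability plus $H_1(C;\Fbb_p)=0$. But that step is exactly what you do not prove. Your first stage (Alexander duality plus universal coefficients to get $H_1(C;\Zbb)=0$) is plausible, but it only shows $\pi_1(C)$ is perfect; the second stage, promoting a perfect fundamental group of an embedded $2$-complex to a trivial one, is left as a two-sentence plan that you yourself flag as ``the essential new technical input.'' That is a genuine gap, not a proof: a compact $3$-manifold with perfect fundamental group need not be simply connected in general (the Poincar\'e homology sphere), so the argument must use the embedding into $\Sbb^3$ in an essential and nontrivial way, and nothing in your sketch does so. Your proposed alternative route is also suspect: space minor operations do not preserve $H_1(\,\cdot\,;\Fbb_p)$ (deleting a face can create first homology), so one cannot simply reduce while keeping the homological hypothesis.

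For comparison, the paper does not attempt any of this here. It quotes a theorem of the companion paper \cite{3space2} stating that a simplicial complex with $H_1(C,\Fbb_p)=0$ embeds in $\Sbb^3$ if and only if it is simply connected and has a planar rotation system; combined with \autoref{rot_minor} (planar rotation system $\Leftrightarrow$ no space minor from $\Zcal$, for simply connected locally $3$-connected complexes), the theorem follows in two lines. So the entire difficulty you identify is delegated to an external result with a concrete reference, whereas your write-up leaves it as an open plan. To repair your proposal within the resources of this paper, you would either have to cite that companion theorem as the paper does, or actually carry out the $3$-manifold argument you gesture at, which is a substantial piece of work and not a routine consequence of \autoref{kura_intro}.
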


In general there are infinitely many obstructions to embeddability in 3-space. Indeed, the 
following infinite family of obstructions appears in \autoref{kura_intro_hom}.
\begin{eg}\label{q-folded}
Given a natural number $q\geq 2$, the $q$-folded cross cap consists of a single vertex, 
a single edge that is a loop and a single face traversing the edge $q$-times in the same 
direction. It can be shown that $q$-folded cross caps cannot be embedded in 3-space.  
\end{eg}
\noindent A more sophisticated infinite family is constructed in 
\cite{3space4}.

\vspace{.3cm}

{\bf Overview over this series as a whole.}
This paper is the first paper in a series of five paper. In what follows we summarise roughly the 
content of the other four papers \cite{3space2, 3space3, 3space4, 3space5}.
The results of \cite{3space2} give combinatorial characterisations when simplicial complexes embed 
in 3-space, which are used in the proofs of \autoref{kura_intro} and \autoref{kura_intro_hom}. The 
paper \cite{3space2} is self-contained.

As mentioned above, the main result of \cite{3space5} is an extension of \autoref{kura_intro} to 
simply connected simplicial complexes. This relies on the current paper and \cite{3space2}.

The paper \cite{3space3} is purely graph-theoretic and its results are used as a tool in 
\cite{3space4}. 

In \cite{3space4}, we prove an extension of the main theorem of  \cite{3space5} that goes beyond 
the simply connected 
case. And we additionally prove the following. Like Kuratowski's theorem, Whitney's theorem is a  
characterisation of planarity of graphs. 
In \cite{3space4} we prove a 3-dimensional analogue of that theorem.

\vspace{.3cm}

{\bf This paper is organised as follows. }
Most of this paper is concerned with the proof of \autoref{kura_intro_hom}, which implies 
\autoref{kura_intro}.
In \autoref{s_rot}, we introduce `planar rotation systems' and state a theorem of \cite{3space2} 
that 
relates embeddability of simply connected simplicial complexes to existence of planar rotation 
systems. 
In \autoref{sec_vertex_sum} we define the operation of `vertex sums' and use it to study rotation 
systems.
In \autoref{s_constr} we relate the existence of planar rotation systems to a property called 
`local planarity'. In \autoref{sec:marked} we characterise local planarity in terms of finitely 
many obstructions. In \autoref{sec:space} we introduce space minors and prove \autoref{kura_intro} 
and \autoref{kura_intro_hom}.

\vspace{.3cm}

For graphs\footnote{In this paper graphs are allowed to have loops and parallel edges. } we follow 
the notation of \cite{DiestelBookCurrent}. Beyond 
that a \emph{2-complex} is a graph $(V,E)$ together with a set $F$ of closed trails\footnote{A 
\emph{trail} is sequence 
$(e_i|i\leq n)$ of distinct edges such that the endvertex of $e_i$ is the starting vertex of 
$e_{i+1}$ for all $i<n$.  A trail is \emph{closed} if the starting vertex of $e_1$ is equal to 
the endvertex of $e_n$.}, called its 
\emph{faces}. In this paper we follow the convention that each vertex or edge of a 
simplicial complex or a 2-complex is incident with a face. 
The definition of  \emph{link graphs} naturally extends from simplicial complexes to 
2-complexes with the following addition: we add two vertices in the link graph $L(v)$ 
for each loop incident 
with $v$. We add one edge to $L(v)$ for each traversal of a face at $v$.

\section{Rotation systems}\label{s_rot}

Rotation systems of 2-complexes play a central role in our proof of \autoref{kura_intro}. In this 
section we 
introduce them and prove some basic properties of them.

A rotation system of a graph $G$ is a family $(\sigma_v|v\in V(G))$ of cyclic 
orientations\footnote{A \emph{cyclic orientation} is a bijection to an oriented cycle.} 
$\sigma_v$ of the edges incident with the vertices $v$ \cite{MoharThomassen}. The 
orientations $\sigma_v$ 
are called 
\emph{rotators}. Any rotation system of a graph $G$ induces an 
embedding of $G$ in an oriented (2-dimensional) surface $S$. To be precise, we obtain $S$ from $G$ 
by gluing faces onto (the geometric realisation of) $G$ along closed walks of $G$ as follows.
Each directed edge of $G$ is in one of these walks. Here the direction $\vec{a}$ is 
directly before the direction $\vec{b}$ in a face $f$ if the endvertex $v$ of $\vec{a}$ is equal to 
the starting vertex of $\vec{b}$ and $b$ is just after $a$ in the rotator at $v$.
The rotation system is \emph{planar} if that surface $S$ is a disjoint union of 2-spheres. 
Note that if the graph $G$ is connected, then for any rotation system of $G$, also the 
surface $S$ is connected.

A \emph{rotation system of a (directed\footnote{A \emph{directed} 2-complex is a 
2-complex together with a choice of 
direction at each of its edges and a choice of 
orientation at each of its 
faces. All 2-complexes considered in this paper are directed. In order to simplify notation we will 
not always say that explicitly. }) 2-complex} $C$ is a  family $(\sigma_e|e\in E(C))$ of 
cyclic orientations $\sigma_e$ of the faces incident with the edge $e$.
A rotation system of a 2-complex $C$ \emph{induces} a rotation system at each of its link graphs 
$L(v)$ by 
restricting to the edges that are vertices of the link graph $L(v)$; here we take $\sigma(e)$ if 
$e$ 
is directed towards $v$ and the reverse of $\sigma(e)$ otherwise. 

A rotation system of a 2-complex is \emph{planar} if all induced rotation systems of link graphs 
are planar. In \cite{3space2} we prove the following, which we use in the proof of 
\autoref{kura_intro}.

\begin{thm}\label{emb_to_rot}{ \cite[\autoref*{combi_intro}]{3space2}}
A simply connected simplicial complex has an embedding in \Sthree\  if and only if it has a planar 
rotation system. 
\end{thm}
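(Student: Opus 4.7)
Suppose $C$ is embedded in $\Sbb^3$. For each edge $e$, pick a small disk $D_e$ meeting $e$ transversely at a single interior point; the faces incident with $e$ cross $D_e$ as arcs emanating from $e\cap D_e$, and the orientation of $\Sbb^3$ together with the direction of $e$ gives these arcs a canonical cyclic order, which I take as $\sigma_e$. To check that the induced rotation system at $L(v)$ is planar, intersect $C$ with a small 2-sphere $S_v$ around $v$: the intersection is a copy of $L(v)$ embedded in $S_v\cong\Sbb^2$. Tracking the sign convention (reversing $\sigma_e$ for edges pointing away from $v$) shows this planar embedding realises exactly the induced rotation system at $L(v)$, so the surface built from that rotation system is a disjoint union of 2-spheres.

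\textbf{Backward direction via thickening.} Given a planar rotation system on $C$, I would construct a compact 3-manifold thickening $M$ of $C$ cell-by-cell. For each vertex $v$, planarity of the induced rotation system on $L(v)$ produces an embedding $L(v)\hookrightarrow\Sbb^2$; thicken $v$ to a 3-ball $B_v$ whose boundary sphere carries this embedding. For each edge $e$, thicken to a small 3-ball containing $e$ as an axis, with the incident faces arranged as a cyclic ``book'' in the order prescribed by $\sigma_e$. For each face $f$, thicken to the product $f\times[-\varepsilon,\varepsilon]$. Glue these pieces along their overlaps; the cyclic orders $\sigma_e$ and the planar link-graph embeddings are precisely the compatibility data needed for the gluings to fit coherently, producing an orientable compact 3-manifold $M$ with boundary whose interior deformation-retracts onto $C$.

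\textbf{From $M$ to $\Sbb^3$, and the main obstacle.} The boundary $\partial M$ is a disjoint union of closed orientable surfaces, with one boundary component for each face of one of the induced planar embeddings of the link graphs. The crux of the proof is to show each such component is a 2-sphere rather than a higher-genus surface. I would argue from simple connectivity of $C$, which $M$ inherits as a regular neighbourhood, combined with an Euler-characteristic or Mayer--Vietoris computation that couples the cell counts of $M$ to those of $\partial M$. Once each boundary component is known to be a 2-sphere, cap them off with 3-balls to obtain a closed simply connected 3-manifold $\widehat M$; by the Poincar\'e conjecture $\widehat M\cong\Sbb^3$, and removing one capping ball exhibits $C\subset M\subset\Sbb^3$. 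An alternative elementary route would be induction on the number of faces, peeling off one face at a time while maintaining simple connectivity and planarity of the restricted rotation system; but identifying a removable face seems to require comparable structural input, so the topological route via Poincar\'e appears cleaner.
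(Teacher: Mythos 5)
This theorem is not proved in the present paper at all: it is imported verbatim from the companion paper \cite{3space2} (cited as such in the statement), so there is no in-text proof to compare against. Judged on its own terms, your outline is the standard — and almost certainly the intended — strategy: read off $\sigma_e$ from a transverse disk and verify planarity of the induced link rotation systems by intersecting with a small sphere around each vertex (for this you should first pass to a piecewise-linear embedding, which is legitimate because topological and PL embeddability agree for 2-complexes in 3-space, as the introduction notes); conversely, build the regular-neighbourhood thickening $M$ from the rotation system and promote $M$ to $\Sbb^3$ using simple connectivity and the Poincar\'e conjecture. The structure of the paper's \autoref{main_detailed2} (equivalence of ``embeds in $\Sbb^3$'', ``embeds in some oriented 3-manifold'', and ``has a planar rotation system'') confirms that the companion paper factors the argument exactly this way.

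The one place where you wave your hands is precisely the crux you name: showing every component of $\partial M$ is a 2-sphere. An unstructured Euler-characteristic count does not obviously do this on its own, but the gap closes cleanly with Poincar\'e--Lefschetz duality: since $M$ deformation retracts onto $C$ we have $\pi_1(M)=1$, hence $H^1(M)=0$, hence $H_2(M,\partial M)\cong H^1(M)=0$; the exact sequence $H_2(M,\partial M)\to H_1(\partial M)\to H_1(M)=0$ then forces $H_1(\partial M)=0$, so each closed orientable boundary component has genus $0$. (Alternatively, combine $\chi(\partial M)=2\chi(M)$ with a duality bound on the number of boundary components.) After capping, van Kampen gives $\pi_1(\widehat M)=1$ and Perelman gives $\widehat M\cong\Sbb^3$. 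Two smaller points: your claim that $\partial M$ has ``one boundary component for each face of one of the induced planar embeddings of the link graphs'' is wrong — those faces are only the local disks near each vertex, and a single global boundary component is assembled from many of them across different vertices, edges and faces; and if some $L(v)$ is disconnected its planar rotation system lives on several spheres, so the vertex thickening needs a word of justification (place the components in disjoint disks on $\partial B_v$). Neither affects the correctness of the overall argument.
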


Given a 2-complex $C$, its link graph $L(v)$ is \emph{loop-planar} if it has a planar rotation 
system such that for every loop $\ell$ incident with $v$ the rotators at the two vertices $e_1$ and 
$e_2$
associated to $\ell$ are reverse -- when we apply the following bijection between the edges 
incident with $e_1$ and $e_2$. If $f$ is an edge incident with the vertex $e_1$ whose face of $C$ 
consists 
only of the loop $\ell$, then $f$ is an edge between $e_1$ and $e_2$ and the bijection is identical 
at that edge. If the face $f$ is incident with more edges than $\ell$, it can by assumption 
traverse 
$\ell$ only once. So there are precisely two edges for that traversal, one incident with $e_1$, the 
other with $e_2$. These two edges are in bijection.

A 2-complex $C$ is \emph{locally planar} if all its link graphs are loop-planar. Clearly, a 
2-complex 
that has a planar rotation system is locally planar. However, the converse is not true. 

Let $C=(V,E,F)$ be a 2-complex and let $x$ be a non-loop edge of $C$, the 2-complex obtained from 
$C$ by \emph{contracting $x$} (denoted by $C/x$) is obtained from $C$ by identifying the two 
endvertices of $x$, deleting $x$ from all faces and then deleting $x$, formally: 
$C/x=((V,E)/x, \{f-x|f\in F\})$. 

Let $C$ be a 2-complex and $x$ be a non-loop edge of $C$, and $\Sigma=(\sigma_e|e\in E(C))$ be a 
rotation system of $C$. The \emph{induced} rotation system of $C/x$ is $\Sigma_x=(\sigma_e|e\in 
E(C)-x)$. This is well-defined as the incidence relation between edges of $C/x$ and faces is the 
same as in $C$. 
Planarity of rotation systems is preserved under contractions:

\begin{lem}\label{contr_pres_planar}
If $\Sigma$ is planar, then $\Sigma_x$ is planar. 

Conversely, for any planar rotation system $\Sigma'$ of $C/x$, if the non-loop edge $x$ is not 
a cutvertex of any of the two link graphs at its endvertices, there is a planar rotation system of 
$C$ 
inducing $\Sigma'$.\footnote{This lemma is proved in 
\autoref{sec_vertex_sum}. }
\end{lem}

Hence the class of 2-complexes that have planar rotation systems is closed under contractions. As 
noted above it contains the class of locally planar 2-complexes, which is clearly not closed under 
contractions. However, if we close the later class under contractions, then 
they do agree -- in the locally 3-connected case as follows. 

\begin{lem}\label{planar_rot_TO_loc_planar}
 A locally 3-connected 2-complex has a planar rotation system if and only if all contractions are 
locally planar. \footnote{\autoref{planar_rot_TO_loc_planar} will follow from 
\autoref{rot_system_exists} below.}
\end{lem}

We remark that by \autoref{sum_3con} below the class of locally 3-connected 2-complexes is 
closed 
under contractions.

\section{Vertex sums}\label{sec_vertex_sum}

In this short section we prove some elementary facts about an operation we call `vertex sum' 
which 
is used in the proof of \autoref{kura_intro}. 

Let $H_1$ and $H_2$ be two graphs with a common vertex $v$ and a bijection $\iota$ between the 
edges incident with $v$ in $H_1$ and $H_2$.
The \emph{vertex sum} of $H_1$ and $H_2$ over $v$ given $\iota$ is the graph obtained from the 
disjoint union of $H_1$ and $H_2$ by deleting $v$ in both $H_i$ and adding an edge between any pair 
$(v_1,v_2)$ of vertices $v_1\in V(H_1)$ and  $v_2\in V(H_2)$ such that $v_1v$ and $v_2v$ are 
mapped to one another by $\iota$, see \autoref{fig:vx_sum}.  
   \begin{figure} [htpb]   
\begin{center}
   	  \includegraphics[height=2.5cm]{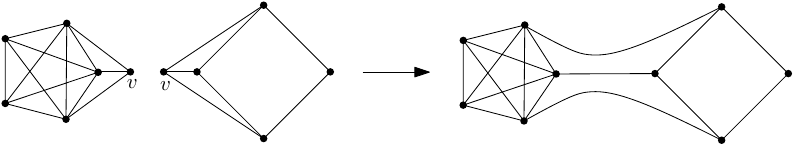}
   	  \caption{The vertex sum of the two graphs on the left is the graph on 
the right.}\label{fig:vx_sum} 
\end{center}
   \end{figure}

Let $C$ be a 2-complex with a non-loop edge $e$ with endvertices $v$ and $w$. 

\begin{obs}\label{obs1}
 The link graph of $C/e$ at 
$e$ is the vertex sum of the link graphs $L(v)$ and $L(w)$ over the common vertex $e$. 
\qed
\end{obs}

\begin{lem}\label{sum_planar1}
 Let $G$ be a graph that is a vertex sum of two graphs $H_1$ and $H_2$ over the common vertex $v$.
Let $(\sigma_x^i|x\in V(H_i))$ be a planar rotation system of $H_i$ for $i=1,2$ such that 
$\sigma_v^1$ is the inverse of $\sigma_v^2$. Then  $(\sigma_x^i|x\in V(H_i)-v, i=1,2)$ is a planar 
rotation system of $G$.
\end{lem}

\begin{proof}[Proof sketch.] This is a consequence of the topological fact that the connected sum 
of two spheres is the sphere. 
\end{proof}

\begin{lem}\label{sum_planar2}
 Let $G$ be a graph that is a vertex sum of two graphs $H_1$ and $H_2$ over the common vertex $v$.
 Assume that the vertex $v$ not a cutvertex of $H_1$ or $H_2$. 
 Assume that $G$ has a planar rotation system $\Sigma$. Then there are planar rotation systems of 
$H_1$ and $H_2$ that agree with $\Sigma$ at the vertices in $V(G)\cap V(H_i)$ and that are reverse 
at $v$.
\end{lem}

\begin{proof}
Since the vertex $v$ is not a cutvertex of the graph $H_2$, the graph $H_1$ can be obtained from 
the graph $G$ by contracting the connected vertex set $V(H_2)-v$ onto a single 
vertex.
Now let a plane embedding $\iota$ of $G$ be given that is induced by the rotation system $\Sigma$. 
Since contractions can be performed within the plane embedding $\iota$, there is a planar rotation 
system $\Sigma_1$ of the graph $H_1$ that agrees with $\Sigma$ at all 
vertices in $V(H_1)-v$.

Since the vertex $v$ is not a cutvertex of $H_1$ or $H_2$, the cut $X$ of $G$ consisting of 
the edges between $V(H_1)-v$ and $V(H_2)-v$ is actually a bond of the graph $G$. 
The bond $X$ is a circuit $o$ of the dual graph of $G$ with respect to the embedding $\iota$. And 
the rotator at $v$ of the embedding $\Sigma_1$ is equal (up to reversing) to the cyclic orientation 
of the edges on the circuit $o$.
Similarly, we construct a planar rotation system  $\Sigma_2$ of $H_2$ that agrees with $\Sigma$ at 
all 
vertices in $V(H_2)-v$, and the rotator at the vertex $v$ is the other orientation of the circuit 
$o$. This completes the proof. 
\end{proof}

\begin{proof}[Proof of \autoref{contr_pres_planar}.]
 This is a consequence of \autoref{sum_planar1} and \autoref{sum_planar2}.
\end{proof}

\begin{lem}\label{sum_3con}
 Let $G$ be a graph that is a vertex sum of two graphs $H_1$ and $H_2$ over the common vertex $v$.
 Let $k\geq 2$. 
If $H_1$ and $H_2$ are $k$-connected\footnote{Given $k\geq 2$, a graph with at least $k+1$ 
vertices is \emph{$k$-connected} if the removal of less than $k$ vertices does not make it 
disconnected. Moreover it is not allowed to have loops and if $k>2$, then it is not allowed to 
have parallel edges. }, then so is $G$. 
\end{lem}

\begin{proof} 
 Suppose for a contradiction that there is a set of less than $k$ vertices of $G$ such that $G\sm 
X$ 
is disconnected. 
Let $Y$ be the set of edges incident with $v$ (suppressing the bijection between the edges incident 
with $v$ in $H_1$ and $H_2$ in our notation). As $H_1$ is $k$-connected, the set $Y$ contains at 
least $k$ edges. If $k>2$, then since no $H_i$ has parallel edges, no two edges in $Y$ share a 
vertex. Thus in this case the set $Y$ contains $k$ edges that are vertex disjoint. If $k=2$, 
then either one $H_i$ consists of a single class of parallel edges and the 
lemma is immediate; or else, there are 
two disjoint edges of $Y$ -- here this is true as $Y$  considered as a subgraph of $G$ is a 
bipartite graph with at 
least two 
vertices on either side each having degree at least one. 

Hence by the pigeonhole principle, there is an edge $e$ in $Y$ such that no endvertex of $e$ is in 
$X$.  Let $C$ be the component of $G\sm X$ 
that contains $e$. Let $C'$ be a different component of $G\sm X$. Let $i$ be such that $H_i$ 
contains a vertex $w$ of $C'$. 

In $H_i$ this vertex $w$ and an endvertex of $e$ are separated by $X+v$. As $H_i$ is $k$-connected, 
we deduce that all vertices of $X$ are in $H_i$. Then the connected graph $H_{i+1}$ is a subset of 
$C$. Hence the vertex $w$ and an endvertex of $e$ are separated by $X$ in $H_i$. This is a 
contradiction to the assumption that $H_i$ is $k$-connected. 
\end{proof}

\vspace{.3cm}
In our proof we use the following simple fact.
\begin{lem}\label{branch_contract}
 Let $G$ be a graph with a minor $H$. Let $v$ and $w$ be vertices of $G$ contracted 
to 
the same vertex of $H$. Then there is a minor $G'$ of $G$ such that $v$ and $w$ are contracted to 
different vertices of $G'$ and their branch vertices are joined by an edge $e$ and $H=G'/e$. 
\qed
\end{lem}

\section{Constructing planar rotation systems}\label{s_constr}

The aim of this section is to prove the following lemma, which is used in the proof of 
\autoref{kura_intro}. This lemma roughly says that a 2-complex has a planar rotation system 
if and only if certain contractions are locally planar. A \emph{chord} of a cycle $o$ is an edge 
not in $o$ joining two distinct vertices in $o$ but not parallel to an edge of $o$. A cycle that 
has no 
chord is \emph{chordless}.

\begin{lem}\label{rot_system_exists}
 Let $C$ be a locally 3-connected 2-complex. 
Assume that the following 2-complexes are locally 
planar: $C$, for every non-loop edge $e$ the contraction $C/e$, and for every non-loop chordless 
cycle $o$ of 
$C$ and some $e\in o$ the contraction $C/(o-e)$.

Then $C$ has a planar rotation system. 
\end{lem}

First we show the following.

\begin{lem}\label{locally_at_edge}
Let $C$ be a 2-complex with an edge $e$ with endvertices $v$ and $w$. 
Assume that the link graphs $L(v)$ and $L(w)$ at $v$ and $w$ are 3-connected and that the link 
graph 
$L(e)$ of $C/e$ at $e$ is planar.
Then for any two planar rotation systems of $L(v)$ and $L(w)$ the rotators at $e$ are reverse of 
one 
another or agree. 
\end{lem}

\begin{proof}
Let $\Sigma=(\sigma_x|x\in (L(v)\cup L(w))-e)$ be a planar rotation system of $L(e)$. By 
\autoref{sum_planar2} there is a rotator $\tau_e$ at $e$ such that $(\sigma_x|x\in L(v)-e)$ 
together 
with $\tau_e$ is a 
planar rotation system of $L(v)$ and $(\sigma_x|x\in L(w)-e)$ together with the inverse of $\tau_e$ 
is a planar rotation system of $L(w)$.

Since $L(v)$ and $L(w)$ are 3-connected, their planar rotation system are unique up to reversing 
and hence the lemma follows.
\end{proof}

Let $C$ be a locally 3-connected 2-complex such that $C$ and for every non-loop $e$ all 
contractions $C/e$ are locally planar. We pick a planar rotation system $(\sigma_e^v| e\in 
V(L(v)))$ 
at each link graph $L(v)$ of $C$. By \autoref{locally_at_edge}, for every edge $e$ of $C$ with 
endvertices $v$ and $w$ the rotators $\sigma_e^v$ and $\sigma_e^w$ are reverse or agree. We colour 
the edge $e$ green if they are reverse and we colour it red otherwise.

A \emph{pre-rotation system} is such a choice of rotation systems such that all edges are coloured 
green. The following is an immediate consequence of the definitions.

\begin{lem}\label{pre-rot}
 $C$ has a pre-rotation system if and only if $C$ has a planar rotation system.
 \qed
\end{lem}

\begin{lem}\label{is_even}
 Let $o$ be a cycle of $C$ and $e$ an edge on $o$. Assume that the link graph $L[o,e]$ of $C/(o-e)$ 
at 
$e$ is loop-planar. Then the number of red edges of $o$ is even.
\end{lem}

\begin{proof} 
Since $L[o,e]$ is loop-planar, by \autoref{sum_planar2} there are planar rotation 
systems of all link graphs of vertices of $C$ on $o$ such that for every edge $x\in o$ with 
endvertices $v$ and $w$ the rotators $\sigma_x^v$ and $\sigma_x^w$ are reverse. 
Hence there are assignments of planar rotation systems to the link graphs at vertices of $o$ such 
the number of red edges on $o$ is zero. 

Since all link graphs are 3-connected, the planar rotation systems are unique up to reversing. 
Reversing 
a rotation system flips the colours of all incident edges. Hence for any assignment of planar 
rotation systems the number of red edges of $o$ must be even. 
\end{proof}

\begin{proof}[Proof of \autoref{rot_system_exists}.]
By \autoref{pre-rot}, it suffices to construct a pre-rotation system, that is, to construct suitable
rotation systems at each link graph of $C$.

 We may assume that $C$ is connected. We pick a spanning tree $T$ of $C$ with root $r$.
 At the link graph at $r$ we pick an arbitrary planar rotation system. Now we define a
rotation system $(\sigma_e^v| e\in V(L(v)))$ at some vertex $v$ assuming that 
for the unique neighbour $w$ of $v$ nearer to the root in $T$ we have already 
defined a rotation system $(\sigma_e^w| e\in V(L(w)))$. Let $e$ be the edge between $v$ and $w$ 
that 
is in $T$. 
By \autoref{locally_at_edge}, there is a planar rotation system $(\sigma_e^v| e\in V(L(v)))$ of the 
link graph $L(v)$ such that the rotators $\sigma_e^v$ and $\sigma_e^w$ are reverse. As $C$ is 
connected, this defines a planar rotation system at every vertex of $C$. It remains to show that 
every edge $e$ of $C$ is green with respect to that assignment. This is true by construction if $e$ 
is 
in $T$. 

\begin{lem}
 Every edge $e$ of $C$ that is not in $T$ and is not a loop is green.
\end{lem}

\begin{proof}
 Let $o_e$ be the fundamental cycle of $e$ with respect to $T$. 
We prove by induction on the number of edges of $o_e$ that $e$ is green. The base case is that 
$o_e$ is chordless. Then by assumption the link graph $L[o,e]$ of $C/(o-e)$ at $e$ is loop-planar. 
So the number of red edges on $o_e$ is even by 
\autoref{is_even}. As shown above all edges of $o_e$ 
except for possibly $e$ are green. So $e$ must be green. 

Thus we may assume that $o_e$ has chords. By shortcutting along chords we obtain a chordless 
cycle $o_e'$ containing $e$ such that each edge $x$ of $o_e'$ not in $o_e$ is a chord of $o_e$.
Thus each such edge $x$ is not in $T$ and not a loop. Since no chord $x$ can be parallel to 
$e$, the corresponding fundamental cycles $o_x$ have each strictly less edges than 
$o_e$. Hence by induction all the edges $x$ are green. Thus all edges of $o_e'$ except for possibly 
$e$ are green. Similarly as in the base case we can now apply  \autoref{is_even} to the chordless 
cycle $o_e'$ to deduce that $e$ 
is green. 
\end{proof}

\begin{sublem}
Every loop $\ell$ of $C$ is green. 
\end{sublem}
\begin{proof}
Let $v$ be the vertex incident with $\ell$. As the link graph $L(v)$ is 3-connected and 
loop-planar each of its (two) planar rotation systems must witness that $L(v)$ is loop-planar. 
Hence 
the rotation system we picked at $L(v)$ witnesses that $L(v)$ is loop planar. Thus $\ell$ is green.
\end{proof}

As all edges of $C$ are green with respect to $\Sigma$, the family $\Sigma$ is a pre-rotation 
system of $C$. Hence $C$ has a planar rotation system by \autoref{pre-rot}.
\end{proof}

\section{Marked graphs}\label{sec:marked}

In this section we prove \autoref{loc_pl_summary} and \autoref{loc_pl_summary_strict} which are 
used in the proof of 
\autoref{kura_intro}. More precisely, these lemmas characterise when a 
2-complex is 
locally planar in terms of finitely many obstructions. 

A \emph{marked graph} is a graph $G$ together with two of its vertices $v$ and $w$ and three pairs 
$((a_i,b_i)|i=1,2,3)$ of its edges, where the $a_i$ are incident with $v$ and the $b_i$ are 
incident with $w$. We stress that we allow $a_i=b_i$.

Given a 2-complex $C$, a link graph $L(x)$ of $C$, a loop $\ell$ of $C$ incident with $x$ and three 
distinct faces $f_1, f_2, f_3$ of $C$  traversing $\ell$, the marked graph \emph{associated} with 
$(x,\ell,f_1,f_2,f_3)$ is the graph $L(x)$ together with the two vertices $v$ and $w$ of $L(x)$ 
corresponding to $\ell$. The traversal of each face $f_i$ of $\ell$ corresponds to edges $a_i$ 
and $b_i$ incident with $v$ and $w$, respectively. As $f_i$ is a closed trail in $C$, each vertex 
of $L(x)$ is incident with at most one edge corresponding to $f_i$. Hence $a_i$ and $b_i$ are 
defined unambiguously. Note that if $f_i$ consists only of $\ell$, then $a_i=b_i$. 
This completes the definition of the associated marked graph $(G,v,w, ((a_i,b_i)|i=1,2,3))$. 

A marked graph $(G,v,w, ((a_i,b_i)|i=1,2,3))$ is \emph{planar} if there is a planar rotation system 
$(\sigma_x|x\in V(G))$ of $G$ such that $\sigma_v$ restricted to $(a_1,a_2,a_3)$ is the inverse 
permutation of 
$\sigma_w$ restricted to $(b_1,b_2,b_3)$ -- when concatenated with the bijective map $b_i\mapsto 
a_i$. The next lemma characterises loop-planarity. 

\begin{lem}\label{loop_to_marked}
 A 3-connected link graph $L(x)$ is loop-planar if and only if it is a planar graph and all its 
associated marked graphs are planar marked graphs. 
\end{lem}

\begin{proof}
Clearly, if $L(x)$ is loop-planar, then all its link graphs and all their associated marked 
graphs are planar. 
Conversely assume that a link graph $L(x)$ and all its associated marked graphs are planar. Then 
$L(v)$ has a planar rotation system $\Sigma$. As $L(x)$ is 3-connected, this rotation system is 
unique up to 
reversing. Hence any planar rotation system witnessing that some associated marked graph is planar 
is equal to $\Sigma$ or its inverse. By reversing that rotation system if necessary, we may assume 
that it is equal to $\Sigma$. Hence $\Sigma$ is a planar rotation system that witnesses that $L(x)$ 
is loop-planar.
\end{proof}

\begin{cor}
 A locally 3-connected 2-complex $C$ is locally planar if and only if all its link graphs and 
all their associated marked graphs are planar. 
\end{cor}

\begin{proof}
 By definition, a 2-complex is locally planar if all its link graphs are loop-planar. 
\end{proof}

A marked graph $(G,v,w, ((a_i,b_i)|i=1,2,3))$ is \emph{3-connected} if $G$ is 3-connected.
We abbreviate $A=\{a_1,a_2,a_3\}$ and $B=\{b_1,b_2,b_3\}$. 

A \emph{marked minor} of a marked graph $(G,v,w, ((a_i,b_i)|i=1,2,3))$ is obtained by doing a 
series of the following operations:
\begin{itemize}
 \item contracting or deleting an edge not in $A\cup B$;
 \item replacing an edge $a_i\in A\sm B$ and an edge $b_j\in B\sm A$ that are in parallel by a 
single new edge which is in that parallel class. In the reduced graph, this new edge is 
$a_i$ and $b_j$. 
\item the above with `serial' in place of `parallel'. 
\item apply the bijective map $(v,A)\mapsto (w,B)$.
\end{itemize}

\begin{lem}\label{minimal_minor}
 Let $\hat G=(G,v,w, ((a_i,b_i)|i=1,2,3))$ be a marked graph such that G is planar.
Let $\hat H$ be a 3-connected marked minor of $\hat G$. Then $\hat G$ is planar if and only if 
$\hat H$ is planar.
\end{lem}

Before we can prove this, we need to recall some facts about rotation systems of graphs. 
Given a graph $G$ with a rotation system $\Sigma=(\sigma_v|v\in V(G))$ and an edge $e$. The 
rotation 
system \emph{induced} by $\Sigma$ on $G-e$ is $(\sigma_v-e|v\in V(G))$. Here $\sigma_v-e$ is 
obtained from the cyclic ordering $\sigma_v$ by deleting the edge $e$. 
The rotation 
system \emph{induced} by $\Sigma$ on $G/e$ is $(\sigma_v|v\in V(G/e)-e)$ together with $\sigma_e$ 
defined as follows. Let $v$ and $w$ be the two endvertices of $e$. Then $\sigma_e$ is obtained from 
the cyclic ordering $\sigma_v$ by replacing the interval $e$ by the interval $\sigma_w-e$ (in 
such a way that the predecessor of $e$ in $\sigma_v$ is followed by the successor of $e$ in 
$\sigma_w$). Summing up, $\Sigma$ induces a rotation system at every minor of $G$. Since the class 
of plane graphs\footnote{A \emph{plane graph} is a graph together with an embedding in the plane. } 
is closed under taking minors, rotation systems induced by planar rotation systems are planar. 

\begin{proof}[Proof of \autoref{minimal_minor}]
Let $\Sigma$ be a planar rotation system of $G$. Let $\Sigma'$ be the rotation system of the 
graph $H$ of $\hat H$ induced by $\Sigma$. As mentioned above, $\Sigma'$ is planar. 

Moreover, $\Sigma$ witnesses that  $\hat G$ is a planar marked graph if and only if   $\Sigma'$ 
witnesses that  $\hat H$ is a planar marked graph. Hence if $\hat G$ is planar, so is $\hat H$. Now 
assume that $\hat H$ is planar. Since $H$ is 3-connected, it must be that $\Sigma'$ witnesses that 
the marked graph $\hat H$ is planar. Hence the marked graph $\hat G$ is planar.
\end{proof}

Our aim is to characterise when 3-connected marked graphs are planar. By \autoref{minimal_minor} 
it suffices to study that question for marked-minor minimal 3-connected marked 
graphs; we call such marked graphs  \emph{3-minimal}. 

It is reasonable to expect -- and indeed true, see below -- that there are only finitely 
many 3-minimal marked graphs. In the following we shall compute them explicitly.

Let $\hat G=(G,v,w, ((a_i,b_i)|i=1,2,3))$ be a marked graph. 
We denote by $V_A$ the set of endvertices of edges in $A$ different from $v$.
We denote by $V_B$ the set of endvertices of edges in $B$ different from $w$.

\begin{lem}\label{VA}
 Let $\hat G=(G,v,w, ((a_i,b_i)|i=1,2,3))$ be 3-minimal. Unless $G$ is $K_4$, every edge 
in $E(G)\sm (A\cup B)$ has its endvertices either both in $V_A$ or both in 
$V_B$. 
\end{lem}

\begin{proof}
By assumption $G$ is a 3-connected graph with at least five vertices such that any proper marked 
minor of 
$\hat G$ is not 3-connected. Let $e$ be an edge of $G$ that is not in $A\cup B$. By Bixby's Lemma 
{\cite[Lemma 8.7.3]{Oxley2}} either $G-e$ is 3-connected\footnote{The notion of `3-connectedness' 
used in {\cite[Lemma 8.7.3]{Oxley2}} is slightly more general than the notion used here. Indeed, 
the additional 3-connected graphs there are subgraphs of $K_3$ or subgraphs of $U_{1,3}$ -- the 
graph with two vertices and three edges in parallel. It is straightforward to check that these 
graphs 
do not come up here as they cannot be obtained from a 3-connected graph with at least 5 vertices by 
a single operation of deletion or contraction (and simplification as above). } after suppressing 
serial 
edges or $G/e$ is 3-connected after suppressing parallel edges. 

\begin{sublem}\label{del7}
 There is no 3-connected graph $H$ obtained from $G-e$ by suppressing 
serial edges.
\end{sublem}

\begin{proof}
Suppose for a contradiction that there is such a graph $H$. As $G$ is 3-connected, every class of 
serial edges of $G-e$
has size at most two.  
By minimality of $G$, there is no marked minor of $\hat G$ with graph $H$. Hence one of these 
series classes has to contain two edges in $A$ or two edges in $B$. By symmetry, we may assume that 
$e$ has an endvertex $x$ that is incident with two edges $e_1$ and $e_2$ in $A$. As $G$ is 
3-connected these two adjacent edges of $A$ can only share the vertex $v$. Thus $x=v$. This 
is a 
contradiction to the assumption that $e_1$ and $e_2$ are in series as $v$ is incident with the 
three edges of $A$.
\end{proof}

By \autoref{del7} and Bixby's Lemma, we may assume that the graph $H$ obtained from $G/e$ by 
suppressing parallel edges is 3-connected. By minimality of $G$, there is no marked minor of 
$\hat G$ with graph $H$. Hence $G/e$ has a nontrivial parallel class. And it must contain two edges 
$e_1$ and $e_2$ that are both in $A$ or both in $B$. By symmetry we may assume that $e_1$ and $e_2$ 
are in $A$. Since $G$ is 3-connected, the edges $e$, $e_1$ 
and $e_2$ form a triangle in $G$. The common vertex of $e_1$ and $e_2$ is $v$. Thus both 
endvertices of $e$ are in $V_A$.
\end{proof}

A consequence of \autoref{VA} is that every 3-minimal marked graph has at most most 12 edges. 
However, we can say more:

\begin{cor}\label{cor100}
 
Let $\hat G=(G,v,w, ((a_i,b_i)|i=1,2,3))$ be 3-minimal. Then $G$ has at most 
five 
vertices.
\end{cor}

\begin{proof}
 Let $G_A$ be the induced subgraph with vertex set $V_A+v$. Let $G_B$ be the induced subgraph 
with vertex set $V_B+w$.
Note that $G=G_A\cup G_B$. 
If $G_A$ and $G_B$ have at least three vertices in common, then $G$ has at most five vertices as 
$G_A$ and $G_B$ both have at most four vertices. Hence we may assume that $G_A$ and $G_B$ have at 
most two vertices in common. As $G$ is 3-connected, the set of common vertices cannot be a 
separator of $G$. Hence $G_A\se G_B$ or $G_B\se G_A$. Hence $G$ has at most four vertices in this 
case. 
\end{proof}

An \emph{unlabelled marked graph} is a graph $G$ together with vertices $v$ and $w$ and edge sets 
$A$ and $B$ of size three such that all edges of $A$ are incident with $v$ and all edges in $B$ are 
incident with $w$. The \emph{underlying} unlabelled marked graph of a marked graph $(G,v,w, 
((a_i,b_i)|i=1,2,3))$ is $G$ together with $v$, $w$ and the sets $A=\{a_1,a_2,a_3\}$ and 
$B=\{b_1,b_2,b_3\}$.  
Informally, an unlabelled marked graph is a marked graph without the bijection between the sets 
$A$ 
and $B$. For a planar 3-connected unlabelled marked graph, there are three bijections between $A$ 
and $B$ for which the associated marked graph is planar as a marked graph. For the other three 
bijections it is not planar. 

   \begin{figure} [htpb]   
\begin{center}
   	  \includegraphics[height=2.5cm]{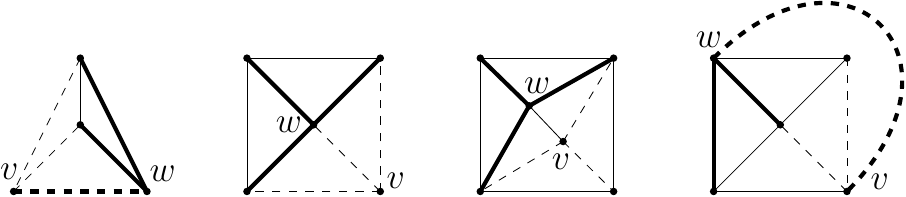}
   	  \caption{The four unlabelled marked graphs in $\Xcal$. The edges in 
$A$ are depicted dotted, the ones in $B$ are bold.}\label{fig:minimal} 
\end{center}
   \end{figure}

Marked graphs $\hat G=(G,v,w,((a_i,b_i)|i=1,2,3))$ associated to link graphs always have the 
property that the vertices $v$ and $w$ are distinct. 3-minimal marked graphs need not have this 
property. Of particular interest to us is the class $\Xcal$ depicted in \autoref{fig:minimal}; 
indeed, they describe the 3-connected marked graphs with the property that $v\neq 
w$ that are marked minor minimal with $G$ planar, as shown in the following. We shall refer to the 
four members of $\Xcal$ in the linear ordering given by accessing \autoref{fig:minimal} from left 
to right (and say things like `the first member of $\Xcal$').

\begin{lem}\label{char_real}
Let $\hat G=(G,v,w,((a_i,b_i)|i=1,2,3))$ be a 3-connected marked graph  with $v\neq w$ and $G$ 
planar. Then $\hat G$ has a marked minor that has an underlying 
unlabelled marked 
graph in $\Xcal$.
\end{lem}

\begin{proof}
By \autoref{cor100}, $\hat G$ has a marked minor minimal 3-connected marked minor $\hat 
H=(H,v,w,((a_i,b_i)|i=1,2,3))$, where $H$ has at most five vertices. 

\begin{sublem}\label{cases}
 The only 3-connected planar graphs with at most five vertices are $K_4$, the 4-wheel and $K_5^-$.
\end{sublem}

\begin{proof}
Since $K_4$ is the only 3-connected graph with less than five vertices, it suffices to consider the 
case where the graph $K$ in question has five vertices. As five is an odd number and $K$ has 
minimum degree 3, $K$ has a vertex $v$ of degree 4. Hence $K-v$ is 2-connected. Hence it has to 
contain a 4-cycle. Thus $K$ has the 4-wheel as a subgraph. Thus $K$ is  the 4-wheel, $K_5^-$ or 
$K_5$. As $K$ is planar, it cannot be $K_5$.
\end{proof}

By \autoref{cases}, $H$ is $K_4$, the 4-wheel or $K_5^-$. In the following we treat 
these cases separately.
As above we let  
$A=\{a_1,a_2,a_3\}$ and $B=\{b_1,b_2,b_3\}$.
 
{\bf Case 1:} $H=K_4$.  If the vertices $v$ and $w$ of $H$ are distinct, then  the underlying 
unlabelled marked graph of $\hat{H}$ is the first member of $\Xcal$ and the lemma is true in this 
case.
Suppose for a contradiction that $v=w$. Then each edge incident with $v$ is in $A$ and $B$. Let 
$H'$ be the marked graph obtained from $\hat H$ by replacing each edge incident with $v$ by two 
edges in parallel, one in $A$, one in $B$. It is clear that $H'$ is a marked minor of $\hat G$. 
By applying \autoref{branch_contract} to the graph of $H'$, we deduce that $G$ has $K_5$ as a minor.
This is a contradiction to the assumption that $G$ is planar.

{\bf Case 2A:} $H$ is the 4-wheel and $v\neq w$.

{\bf Subcase 2A1:} $v$ or $w$ is the center of the 4-wheel.
By applying the bijective map $(v,A)\mapsto (w,B)$ if necessary, we may assume that $w$ is the 
center. 
Our aim is to show that the underlying 
unlabelled marked graph of $\hat H$ is the second member of $\Xcal$.
As $v$ has degree three, $A$ is as desired. 
By \autoref{VA}, the two edges on the rim not in $A$ must have both their endvertices in $V_B$. 
Hence $B$ is as desired. Thus the underlying 
unlabelled marked graph of $\hat H$ is the second member of $\Xcal$.

{\bf Subcase 2A2:}  $v$ and $w$ are adjacent vertices on the 
rim. We shall show that this case is not possible. Suppose for a contradiction that it is possible.

We denote by $e$ the edge on the rim not incident with $v$ or $w$. One endvertex has distance two 
from $v$, the other has distance two from $w$. Hence the endvertices of $e$ cannot both be in $V_A$ 
or both be in $V_B$. This is a contradiction to \autoref{VA}. 

{\bf Subcase 2A3:} $v$ and $w$ are opposite vertices 
on the rim. We shall show that this case is not possible. Suppose for a contradiction that it is 
possible.

There is an edge incident with the center not incident with $v$ or $w$. Deleting 
that edge and suppressing the vertex of degree two gives a marked 
graph whose graph is $K_4$. Hence $\hat H$ is not minimal in that case, a contradiction.
This completes Case 2A. 

 {\bf Case 2B:} $H$ is the 4-wheel and $v=w$. 
By \autoref{VA}, every edge not in $A\cup B$ must have both endvertices in $V_A$ or $V_B$. Hence 
$v$ can only be the center of the 4-wheel. 
By the minimality of $\hat H$ and by \autoref{VA}, each edge of 
the rim has both its endvertices in $V_A$ or in $V_B$. At most two edges of the rim can have all 
their endvertices in $V_A$ and in that case these edges are adjacent on the rim. The same is 
true for $V_B$. 

We denote the vertices of the rim by $(v_i|i\in \Zbb_4)$, where $v_iv_{i+1}$ is an edge. By 
symmetry, we may assume that $v_1$ is the unique vertex of the rim not in $V_A$. Then $v_3$ must be 
the unique vertex of the rim not in $V_B$. It follows that the edges $vv_2$ and $vv_4$ are in $A$ 
and $B$. Let $H'$ be the marked graph obtained from $\hat H$ by replacing each of $vv_2$ and 
$vv_4$ by two edges in parallel, one in $A$, one in $B$. It is clear that $H'$ is a marked minor of 
$\hat G$. Let $H''$ be the marked graph obtained from $H'$ by applying \autoref{branch_contract}. 
The underlying unlabelled marked graph of $H''$ is the third member of $\Xcal$.

  {\bf Case 3:} $H$ is $K_5^-$. 
 
 We shall show that the underlying unlabelled marked graph  of $\hat H$ is the forth graph of 
$\Xcal$. 
 $H$ has three vertices of degree four, which lie one a common 3-cycle. Removing any edge of that 
3-cycle gives a graph isomorphic to the 4-wheel. Hence by minimality of $\hat H$, it must be that 
this 3-cycle is a subset of $A\cup B$. In particular, $v$ and $w$ are distinct vertices on that 
3-cycle. Up to symmetry, there is only one choice for 
$v$ and $w$. 
By applying the map $(v,A)\mapsto (w,B)$ if necessary, we may assume that $A$ contains at least 
two edges of that 3-cycle. 

We denote the two vertices of $H$ of degree three by $u_1$ and $u_2$. We denote the vertex of 
degree four different from $v$ and $w$ by $x$. 
By exchanging the roles of $u_1$ and $u_2$ if necessary, we may assume that $A=\{vw, vx, 
vu_1\}$. 

Recall that $wx\in B$. 
The endvertex $u_2$ of the edge $vu_2$ is not in $V_A$ and this edge cannot be in $B$. Hence by 
\autoref{VA}, both its endvertices must be in $V_B$. Hence $vw\in B$ and $wu_2\in 
B$. 
Summing up 
$B=\{wx, vw, wu_2\}$. Thus in this case the underlying unlabelled graph of $\hat H$ is the 
forth graph of $\Xcal$. 
\end{proof}

By $\Ycal$ we denote the class of marked graphs that are not planar as marked graphs and whose 
underlying unlabelled marked graphs are isomorphic to a member of $\Xcal$ -- perhaps after applying 
the bijective map $(v,A)\mapsto (w,B)$. We consider two marked graphs the same if they have the 
have the same graph and the 
same bijection between the sets $A$ and $B$ (although the elements in $A$ might have different 
labels). Hence for each $X\in \Xcal$, there are precisely three 
marked 
graphs in $\Ycal$ with underlying unlabelled marked graph $X$, one for each of the three bijections 
between $A$ and $B$ that are not compatible with any rotation system of the graph of $X$ (which is 
3-connected). Thus $\Ycal$ has twelve elements. 

Summing up we have proved the following. 

\begin{lem}\label{loc_pl_summary}
 A locally 3-connected 2-complex is locally planar if and only if all its link graphs are planar 
and 
all their associated marked graphs do not have a marked minor from $\Ycal$. 
\end{lem}

\begin{proof}
Since no marked graph in $\Ycal$ is planar, it is immediate that if a  2-complex is locally planar, 
then all its link graphs are planar 
and all their associated marked graphs do not have a marked minor from $\Ycal$. 

For the other implication it suffices to show that any 3-connected link graph $L(x)$ that is planar 
but not loop-planar has an associated marked graph that has a marked minor in $\Ycal$. 
By \autoref{loop_to_marked}, $L(x)$ has an associated marked graph $\hat G$ that is not planar. By 
\autoref{char_real}, $\hat G$ has a marked minor $\hat H$ whose underlying unlabelled marked graph 
is in 
$\Xcal$. By \autoref{minimal_minor}, $\hat H$ is not planar. Hence $\hat H$ is in $\Ycal$. 
\end{proof}

\autoref{loc_pl_summary} has already the following consequence, which characterises 
embeddability in 3-space by finitely many obstructions.\footnote{As turns out, \autoref{weak_kura} 
is too weak to be used directly in our proof of \autoref{kura_intro}. Indeed, in our proof it will 
not always be possible to contract $C$ onto a single vertex but we need to choose the edges we 
contract carefully (using the additional information provided in \autoref{rot_system_exists}). } 

\begin{cor}\label{weak_kura}
 Let $C$ be a simply connected locally 3-connected 2-complex. Let $C'$ be a contraction of $C$ to a 
single vertex $v$. 
 Then $C$ has an embedding into \Sthree\ if and only if no marked graph associated to the link 
graph at $v$ has a marked minor in the finite set $\Ycal$. 
\end{cor}

\begin{proof}
By \autoref{emb_to_rot}, $C$ is embeddable if and only if it has a 
planar rotation system. By \autoref{planar_rot_TO_loc_planar} $C$ has a planar rotation system if 
and only if $C'$ is locally planar. Hence \autoref{weak_kura} follows from \autoref{loc_pl_summary}.
\end{proof}

In the following we will deduce from \autoref{loc_pl_summary} a more technical analogue. 
A \emph{strict marked graph} is a marked graph $(G,v,w, ((a_i,b_i)|i=1,2,3))$ together with a 
bijective map between the edges incident with $v$ and the edges incident with $w$ that maps $a_i$ 
to 
$b_i$. A \emph{strict marked minor} is obtained by deleting edges not incident with 
$v$ or $w$ or deleting an edge not in $A\cup B$ incident with $v$ and the edge it is bijected to,
and contracting edges if they have an endvertex $x$ of degree two such that 
$x$ is neither equal to $v$ or $w$ nor $x$ is adjacent to $v$ or $w$. 
We also allow to apply the bijective map $(v,A)\mapsto (w,B)$. 
\begin{rem}
 We call this relation the `strict marked minor relation' as it is more restrictive than the 
`marked minor relation'.
\end{rem}

The proof of the next lemma is technical. We invite the reader to skip it when first reading the 
paper. 

\begin{lem}\label{equal}
There is a finite set $\Ycal'$ of strict marked graphs such that a strict marked graph has a strict 
marked minor in $\Ycal'$ if and only if its marked graph has a 
marked minor in $\Ycal$.
\end{lem}

\begin{proof}The \emph{underlyer} of a strict marked graph $\hat Y$ is the the underlying 
unlabelled marked 
graph of the strict marked graph $\hat Y$.
We define $\Ycal'$ and reveal the precise definition in steps during the proof. Now we reveal that 
by $\Ycal'$ we denote the class of strict marked graphs with underlyer in $\Xcal_5$ -- perhaps 
after 
applying 
the bijective map $(v,A)\mapsto (w,B)$. The set $\Xcal_5$, however, is revealed later. We 
abbreviate `strict marked minor' by \emph{5-minor}. 
We define \emph{0-minors} like `marked minors' but on the larger class of strict marked graphs 
where we additionally allow that edges incident with $v$ or $w$ have no image under $\iota$. (This 
is necessary for this class to be closed under 0-minors). Let $\Xcal_0=\Xcal$.

Let $\hat Y$ be a strict marked graph. In this language, it suffices to show that $\hat Y$ has a 
0-minor with underlyer in $\Xcal_0$ if and only if $\hat Y$ has a 5-minor with underlyer in 
$\Xcal_5$.
We will show this in five steps. In the $n$-th step we define \emph{n-minors} and a set 
$\Xcal_n$ of unlabelled marked graphs and prove that $\hat Y$ has an $(n-1)$-minor with underlyer 
in $\Xcal_{n-1}$ if and only if  $\hat Y$ has an $n$-minor with underlyer in $\Xcal_{n}$. 

Starting with the first step, we define \emph{1-minors} like `0-minors' where we do not allow to 
contract edges incident with $v$ or $w$. We define $\Xcal_1$ and reveal it during the proof of the 
following fact. 
\begin{sublem}\label{step1}
 $\hat Y$ has a $0$-minor with underlyer 
in $\Xcal_0$ if and only if  $\hat Y$ has a $1$-minor with underlyer in $\Xcal_{1}$.
\end{sublem}

\begin{proof}

Assume that $\hat Y$ has a 0-minor $\hat Y_0$ with underlyer in $\Xcal_0$. 
So there is a 1-minor $\hat Y_1$ of $\hat Y$ so that we obtain $\hat Y_0$ from $\hat Y_1$ by 
contracting edges incident with $v$ or $w$. We reveal that $\Xcal_1$ is a superset of $\Xcal_0$. 
Hence we may assume that there is an edge of $\hat Y_1$ that is not in $\hat Y_0$. By symmetry, we 
may assume that it is incident with $v$. We denote that edge by $e_v$, see \autoref{fig:sit}. 

   \begin{figure} [htpb]   
\begin{center}
   	  \includegraphics[height=1.25cm]{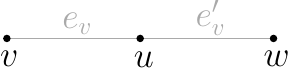}
   	  \caption{The situation of the proof of \autoref{step1}.}\label{fig:sit} 
\end{center}
   \end{figure}

We may assume that $\hat Y_1$ is minimal, that is, it has no proper 1-minor that has a 0-minor 
isomorphic to $\hat Y_0$. Applying this to $\hat Y_1-e_v$, yields that there must be an 
edge $e_v'$ incident with $v$ in $\hat Y_0$ that in $\hat Y_1$ is not incident with 
$v$ but the other endvertex of $e_v$. In particular, the edge $e_v'$ is not in $A$. Let $u$ be the 
common vertex of $e_v$ and $e_v'$. 

Next we show that $u$ is only incident with $e_v$ and $e_v'$ in $Y_1$.
By going through the four unlabelled marked graphs in 
$\Xcal_0=\Xcal$, we check that there is at most one edge incident with $v$ but not in $A$. Hence 
$u$ can only be incident with edges not in $\hat Y_0-e_v'$. Moreover the connected component of 
$Y_1\sm Y_0$ containing $u$ can only contain $v$ and vertices not incident with any edge of $Y_0$. 
Thus by the minimality of $\hat Y_1$, this connected component only contains the edge $e_v$. So 
$u$ is 
only incident with $e_v$ and $e_v'$.

Since $u$ has degree 2, $\hat Y_1/e_v'$ has a 0-minor isomorphic to $\hat Y_0$. By the minimality 
of $\hat Y_1$, it must be that  $\hat Y_1/e_v'$ is not 1-minor of it. Hence $e_v'$ has to be 
incident with $w$. 

Suppose for a contradiction that there is an edge $e_v$ and an edge $e_w$ defined as $e_v$ with 
`$w$' in place of `$v$'. Then as each member of $\Xcal$ has at most one edge between $v$ and $w$, 
it must be that $e_v'=e_w'$. This is a contradiction as $e_v'$ is incident with $w$ but not with 
$v$ in $\hat Y_1$ and for $e_w'$ it is the other way round.

Summing up, we have shown that $\hat Y_1$ is either equal to $\hat Y_0$ or otherwise $\hat Y_0$ has 
an 
edge $e$ between $v$ and $w$ and $\hat Y_1$ is obtained by subdividing that edge. This edge $e$ 
cannot be in $A\cap B$. 

Now we reveal that we define $\Xcal_1$ from $\Xcal$ by adding two more unlabelled marked graphs as 
follows, see 
\autoref{fig:minimal_strict}.
   \begin{figure} [htpb]   
\begin{center}
   	  \includegraphics[height=3.5cm]{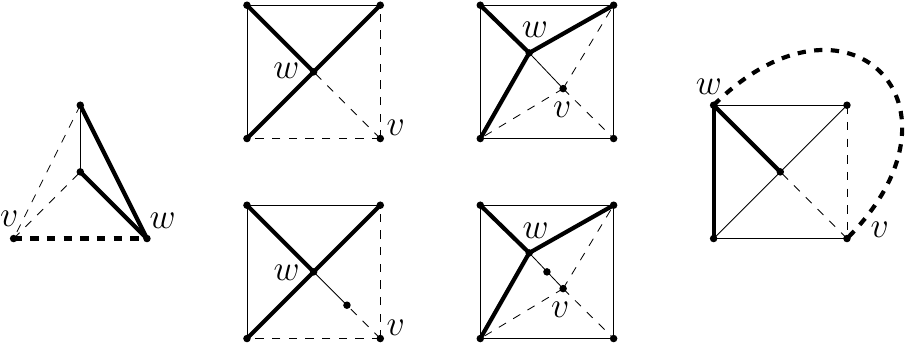}
   	  \caption{The six unlabelled marked graphs in $\Xcal_1$. The edges in 
$A$ are depicted dotted, the ones in $B$ are bold.}\label{fig:minimal_strict} 
\end{center}
   \end{figure}
The first we get from the second member by subdividing the edge between $v$ and $w$ and let the 
subdivision edge incident with $v$ remain in $A$. The second we get from the third member by 
subdividing the edge between $v$ and $w$.

From this construction it follows that if $\hat Y$ has a 0-minor $\hat Y_0$ with underlyer 
in $\Xcal_0$, then the 1-minor $\hat Y_1$ of $\hat Y$ defined above has an underlyer in $\Xcal_1$. 
Hence $\hat Y$ has a 1-minor with underlyer in $\Xcal_1$ if and only if it has a 0-minor with 
underlyer in $\Xcal_0$. 
\end{proof}

 Starting with the second step, we define \emph{2-minors} like `1-minors' where we only allow to 
delete edges incident with $v$ and $w$ in the pairs given by the bijection $\iota$ -- and if they 
are not in $A\cup B$.  
We obtain $\Xcal_2$ from $\Xcal_1$ by adding the following unlabelled marked graphs. 
For each member of  $\Xcal_1$ such that all edges incident with $v$ or $w$ are in $A\cup B$ we add 
no new member. There is one member in $X\in \Xcal_1$ that has an edge incident with $w$ not in 
$A\cup B$ but every edge incident with $v$ is in $A$. We add new members obtained from $X$ by 
adding one more edge incident to $v$ and one other vertex; this may be a vertex of $X-v$ or a new 
vertex.   
All other members of $X'\in \Xcal_1$ have the property that they have exactly one edge incident 
with $v$  not in $A\cup B$ and exactly one edge incident with $w$ not in $A\cup B$. We add new 
members to $\Xcal_2$ obtained from such an $X'$ by 
adding two more non-loop edges, one incident with $v$, the other incident with $w$.\footnote{There 
are some 
technical conditions we could further force these newly added edges to satisfy. For example, 
there are ways in which we could add two edges to the forth member of $\Xcal$ such that the 
resulting unlabelled marked graph has another member of $\Xcal$ as a strict marked minor. 
This would give 
rise to a slightly stronger version of \autoref{equal} and thus of \autoref{kura_intro}. To 
simplify the presentation we do not do it here. }  This completes the definition of $\Xcal_2$. 

\begin{sublem}\label{step2}
 $\hat Y$ has a $1$-minor with underlyer 
in $\Xcal_1$ if and only if  $\hat Y$ has a $2$-minor with underlyer in $\Xcal_{2}$.

\end{sublem}
\begin{proof}
By construction, if  $\hat Y$ has a 2-minor with underlyer in $\Xcal_{2}$, then it has a $1$-minor 
with underlyer 
in $\Xcal_1$. Now conversely assume that  $\hat Y$ has a $1$-minor $\hat Y_1$ with 
underlyer 
in $\Xcal_1$. We define $\hat Y_2$ like  `$\hat Y_1$' except that we only delete edges incident 
with $v$ or $w$ if also their image under $\iota$ is deleted. It remains to show that the 
underlyer of $\hat Y_2$ is in $\Xcal_2$, that is, the graph $Y_2$ has no loops. This is true as the 
graph $Y_1$ has no loops and the additional edges of $Y_2$ are incident with $v$ or $w$. So they 
cannot be loops as no edge of $\hat Y$ incident with $v$ or $w$ is contracted by the definition of 
1-minor.
\end{proof}
 
 Starting with the third step, we define \emph{3-minors} like `2-minors' where we do not 
allow to replace parallel or serial pairs of edges in $A\cup B$ as in the second and third 
operation of marked minor. Each member of $\Xcal_2$ has at most one edge in $A\cap B$. 
We obtain $\Xcal_3$ from $\Xcal_2$ by adding two new member for each $X\in \Xcal_2$ that has an 
edge $e$ in $A\cap B$. The first one we obtain by replacing the edge $e$ by two edges in 
parallel, one in $A\sm B$ and the other in 
$B\sm A$. The second member we construct the same with `parallel' replaced by `serial'. The 
following is immediate. 
\begin{sublem}\label{step3}
 $\hat Y$ has a $2$-minor with underlyer 
in $\Xcal_2$ if and only if  $\hat Y$ has a $3$-minor with underlyer in $\Xcal_{3}$.\qed
\end{sublem}

We define \emph{4-minors} like `3-minors' where we only allow to contract edges $e$ if they have 
an endvertex $x$ of degree two (and as before $e$ is not incident with $v$ or $w$).
\begin{dfn}
 We say that a graph $H$ is obtained from a graph $G$ by \emph{coadding} the edge $e$ of $H$ at the 
vertex $z$ of $G$ if $H/e=G$, and the edge $e$ is contracted onto the vertex $z$ of $G$, and $e$ 
is not a loop in $H$. 
\end{dfn}
We obtain $\Xcal_3$ from $\Xcal_4$ by adding all marked graphs obtained from marked graphs in 
$\Xcal_3$ 
by coadding edges $e$ at vertices different from $v$ and $w$ such that both endvertices of $e$ have 
degree at least three. 
We remark that $\Xcal_4$ is finite as any coadding of such an edge strictly reduces the 
degree-sequence of the graph in the lexicographical order. 
\begin{sublem}\label{step4}
 $\hat Y$ has a $3$-minor with underlyer 
in $\Xcal_3$ if and only if  $\hat Y$ has a $4$-minor with underlyer in $\Xcal_{4}$.
\end{sublem}
\begin{proof}
Clearly every marked graph in $\Xcal_4$ has a 3-minor in $\Xcal_3$. 
Now assume that $\hat Y$ has a 3-minor $\hat H$ with underlyer in $\Xcal_3$. We do the minors as 
before but only contract edges contracted before if they have an endvertex of degree two; and if 
they have an endvertex of degree one or are loops, we delete them instead. The resulting strict 
marked graph $\hat G$ has $\hat H$ as a 3-minor; namely we just need to contract the edges in 
$E(G)\sm E(H)$. However, both endvertices of these edges have degree at least three and they are 
not loops; that is, $G$ can be obtained from $H$ by coadding edges. Thus $\hat G$ is in $\Xcal_4$. 
So $\hat Y$ has a 4-minor in $\Xcal_4$.
\end{proof}

We define \emph{5-minors} like `4-minors', where we additionally require that the endvertex $x$ 
of degree two is not adjacent to $v$ or $w$.
We let $\Xcal_5$ to consists of those marked graphs obtained from a marked graph of 
$\Xcal_4$ by subdividing each edge incident with $v$ or $w$ at most once. 
\begin{sublem}\label{step5}
 $\hat Y$ has a $4$-minor with underlyer 
in $\Xcal_4$ if and only if  $\hat Y$ has a $5$-minor with underlyer in $\Xcal_{5}$.
\end{sublem}
\begin{proof}
Clearly every marked graph in $\Xcal_5$ has a 4-minor in $\Xcal_4$. 
5-minors are slightly more restricted than 4-minors in that there are a few edges we are not 
allowed to contract. These edges have an endvertex of degree two that is adjacent to $v$ or $w$. 
Hence if $\hat Y$ has a $4$-minor with underlyer in $\Xcal_4$, and we do the minors as before but 
do not contract the edges forbidden for 5-minors, we get a strict marked graph with underlyer in  
$\Xcal_5$, which then is a 5-minor of $\hat Y$.
\end{proof}

It is clear from the definitions that 5-minors are just strict marked minors. 
By \autoref{step1}, \autoref{step2}, \autoref{step3}, \autoref{step4} and 
\autoref{step5}, any strict marked graph has a strict 
marked minor with underlyer in $\Xcal_5$ if and only if its marked graph has a 
marked minor with underlyer in $\Xcal_0$. This completes the proof. 
\end{proof}

The set $\Ycal'$ is defined explicitly in the proof of \autoref{equal}. We fix the set $\Ycal'$ as 
defined in that proof. 
The following is analogue to \autoref{loc_pl_summary} for 
strict marked minors. 

\begin{lem}\label{loc_pl_summary_strict}
 A locally 3-connected 2-complex is locally planar if and only if all its link graphs are planar 
and all their associated strict marked graphs do not have a strict marked minor from $\Ycal'$. 
\end{lem}
\begin{proof}
 This is a direct consequence of \autoref{loc_pl_summary} and \autoref{equal}.
\end{proof}

\section{Space minors}\label{sec:space}

In this sections we introduce `space minors' and prove \autoref{kura_intro} and 
\autoref{kura_intro_hom}. 

\subsection{Motivation}
Our approach towards Lov\'asz question mentioned in the Introduction is based on 
the 
following two lines of thought. 

The first line is as follows. Suppose that a 2-complex $C$ can be embedded in \Sthree\, then we can 
define a dual graph $G$ of the embedding as follows. Its vertices are the components of $\Sbb^3\sm 
C$ and its edges are the faces of $C$; each edge is incident with the two components of $\Sbb^3\sm 
C$ touched by its face. It would be nice if the minor operations on the dual graph would 
correspond 
to minor operations on $C$. 

The operation of contraction of edges of $G$ corresponds to 
deletion of faces. But which operation corresponds to deletion of edges of $G$? If the face of $C$ 
corresponding to the edge of $G$ is incident with at most two edges of $C$, then this is the 
operation of contraction of faces (that is, identify the two incident edges along the face). For 
faces of size three, however, it 
is less clear how such an operation could be defined.  

The second line of thought is that we would like to define the minor operation such that we can 
prove an analogue of Kuratowski's theorem -- at least in the simply connected case. 

\autoref{weak_kura} above is already a characterisation of embeddability in 3-space by finitely 
many 
obstructions. However, the reduction operations are not directly operations on 2-complexes (some 
are 
just defined on their link complexes). But does \autoref{weak_kura} imply such a Kuratowski 
theorem?
Thus our aim is to define three operations on 2-complexes that correspond to
\begin{enumerate}
\item contraction of edges that are not loops\footnote{Contractions of loops do not preserve 
embeddability in general (as $\Sbb^3/\Sbb^1\not\cong \Sbb^3$).};
\item deletion of edges in link graphs;
\item contraction of edges in link graphs.
\end{enumerate}

So we make our first operation to be just the first one: contraction of edges that are not loops. A 
natural choice for the second operation is deletion of faces. This very often corresponds to 
deletion 
of edges in the link graph. In some cases however it may happen that a face corresponds to more 
than one edge in a link graph. This is a technicality we will consider later.
Also note that contraction of edges and deletion of faces are "dual"; that is, given a 2-complex 
$C$ embedded in 3-space and the dual complex $D$ (this is the dual graph $G$ defined above with 
a face attached for every edge $e$ of $C$ to the edges of $G$ incident with $e$), contracting 
an edge in $C$ results in deleting a face 
in $D$, and vice versa. This is analogous to the fact that deleting an edge in a plane 
graph corresponds to contracting that edge in the plane dual. 

For the third operation we have some freedom. One operation that corresponds to 3 is the inverse 
operation of contracting an edge. However this would not be compatible with the first 
line of thought and we are indeed able to make such a compatible choice as follows.

If an edge of the link graph corresponds to a face of $C$ that is incident with only two edges of 
$C$, then contracting that face corresponds to contracting the corresponding edge in the link 
graph. 
It is not clear, however, how that definition could be extended to faces of size three (in 
particular if all edges incident with that face are loops; which we have to deal with as we allow 
contractions of edges of $C$). 

Our solution is the following. Essentially, we are able to show that in order to construct a 
bounded 
obstruction in any non-embeddable 2-dimensional simplicial complex (which 
is the crucial step in 
a proof of a Kuratowski type theorem) that is nice enough, we only need to contract faces incident 
with two 
edges but not those of size three! Here `nice enough' means simply connected and locally 
3-connected. Both these conditions can be interpreted as face maximality conditions on the complex, 
see {\cite[\autoref*{general}]{{3space2}}}. 
`Essentially' here means that additionally we have to allow for the following two (rather
simple) operations.

If the link graph at a vertex $v$ of a 2-complex $C$ is disconnected, the 
2-complex obtained from 
$C$ by \emph{splitting} the vertex $v$ is obtained by replacing $v$ by one new vertex for each 
connected component $K$ of the link graph that is incident with the edges and faces in $K$.

Given an edge $e$ in a 2-complex $C$, the 2-complex obtained from $C$ by \emph{deleting the edge 
$e$} is obtained from $C$ by replacing $e$ by parallel edges such that each new edge 
is incident with precisely one face (for an example, see the deletion of the edge $g$ in 
\autoref{fig:space_minor2}). 

\begin{rem}(On a variant of space minors and \autoref{kura_intro}). 
 In our proof we only ever split vertices directly after deleting edges or faces, and after such a 
deletion we can without changing the proof always split the incident vertices. Hence we could 
modify these two operations so that we always afterwards additionally split all vertices incident 
with the deleted edge or face. This way we would only have four space minor operations, one for each 
corner of \autoref{4ops}. And \autoref{kura_intro} would be true in this form. 
\end{rem}

Formally, let $f$ be a face of size two in a 2-complex $C$, the 2-complex $C/f$ obtained from $C$ 
by \emph{contracting} the face $f$ is obtained from $C$ by replacing the face $f$ and its two 
incident edges by a single edge (also denoted by $f$). This new edge is incident with all faces 
that are incident with one of the two edges of $f$ -- and it is incident with the same vertices as 
$f$.

\subsection{Basic properties}

A \emph{space minor} of a 2-complex is obtained by successively performing one of the five 
operations.
\begin{enumerate}
 \item contracting an edge that is not a loop;
 \item deleting a face (and all edges or vertices only incident with that face);
 \item contracting a face of size one\footnote{Although we do not need it in our proofs, it seems 
natural to allow contractions of faces of size one. } or two if its two edges are not loops;
 \item splitting a vertex;
 \item deleting an edge (we also refer to that operation as `forgetting the incidences of an edge').
\end{enumerate}

\begin{rem}
 A little care is needed with contractions of faces. This can create faces traversing edges 
multiple times. In this paper, however, we do not contract faces consisting of two 
loops and we only perform these operations on 2-complexes whose faces have size at most three. 
Hence it could only happen that after contraction some face traverses an edge twice but in 
opposite direction. Since faces have size at most three, these traversals are adjacent. In this 
case 
we omit the two opposite traversals of the edge from the face. We delete faces incident with no 
edge. This ensures that the class of 2-complexes with faces of size at most three is closed under 
face contractions. 
\end{rem}

A 2-complex is \emph{3-bounded} if all its faces are incident with at most three edges. 
The closure 
of the class of simplicial complexes by space minors is the class of 3-bounded 2-complexes.  

It is easy to see that the space minor operations preserve 
embeddability in \Sthree\ (or in any 
other 3-dimensional manifold) and the first three commute when defined.\footnote{In order for the 
contraction of a face to be defined we need the face to have at 
most two edges. This may force contractions of edges to happen before the contraction of the 
face.}
\begin{lem}\label{well-founded}
 The space minor relation is well-founded.
\end{lem}
\begin{proof}
The \emph{face degree} of an edge $e$ is the number of faces incident with $e$. 
We consider the sum $S$ of all face degrees ranging over all edges. 
 None of the five above operations increases $S$. And 1, 2 and 3 
always strictly decrease $S$. Hence we can apply 1, 2 or 3 only a bounded number of times. 

Since no operation increases the sizes of the faces, the 
total number of vertices and edges incident with faces is bounded. Operation 4 increases the number 
of vertices and preserves the number of edges. For operation 5 it is the other way round. Hence we 
can also only apply\footnote{We exclude applications of 4 to a vertex whose link graph is 
connected and applications of 5 to edges incident with a single face. } 4 and 5 a bounded number of 
times. 
\end{proof}

\begin{lem}\label{rot_closed_down}
If a 2-complex $C$ has a planar rotation system, then all its space minors do. 
\end{lem}

\begin{proof}
By \autoref{contr_pres_planar} existence of planar rotation systems is preserved by contracting 
edges that are not loops.  Clearly the operations 2, 4 and 5 preserve planar rotation systems as 
well. Since contracting a face of size two corresponds to locally in the link graph contracting the 
corresponding edges, contracting faces of size two preserves planar rotation systems as noted after 
\autoref{minimal_minor}. The operation that corresponds to contracting a face of size one in the 
link graph is 
explained in \autoref{fig:contr_e}. 
  \begin{figure} [htpb]   
\begin{center}
   	  \includegraphics[height=2cm]{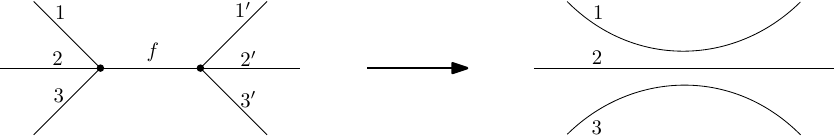}
   	  \caption{The operation that in the link graph corresponds to contracting a face $f$ 
only incident with a single edge $\ell$. The edge $\ell$ must be a loop. Hence in the link graph we 
have two vertices for $\ell$ which are joined by the edge $f$. On the left we depicted that 
configuration. Contracting $f$ in the complex yields the configuration on the right. Formally, we 
delete $f$ and both its endvertices and add for each face $x$ of size at least 
two  traversing $\ell$ an edge as follows. Before the contraction, the link graph contains two 
edges corresponding to  the traversal of $x$ of $\ell$. These edges have precisely two distinct 
endvertices that are not vertices corresponding to $\ell$. We add an edge between these two 
vertices.
}\label{fig:contr_e} 
\end{center}
   \end{figure}
It clearly preserves embeddings in the plane. Thus contracting 
a face of size one also preserves planar rotation systems.
\end{proof}

{\subsection{Generalised Cones}\label{sec:gen_cone}}

In this subsection we define the list $\Zcal$ of obstructions appearing in \autoref{kura_intro} and 
prove basic properties of the related constructions. 

Given a graph $G$ without loops and a partition $P$ of its vertex set into connected sets, the 
\emph{generalised 
cone} over $G$ with respect to $P$ is the following (3-bounded) 2-complex $C$.
Let $H$ be the graph obtained from $G$ by contracting each class of $P$ to a single vertex and 
then removing some of the loops (and keeping parallel edges). 
The vertices of $C$ are the vertices of $H$ together with one extra vertex, which we call the 
\emph{top (of the cone)}. The edges of $C$ are the edges of $H$ together with one edge for each 
vertex $e$ 
of 
$G$ joining the top with the vertex of $H$ that corresponds to the partition class containing $e$. 
We have one face for every edge $f$ of $G$. If that edge is not an edge of $H$, its two 
endvertices in $G$ are in the same 
partition class; this face is only incident with the two edges of $C$ corresponding 
to 
these vertices. Otherwise the face is 
additionally 
incident with the edge of $H$ that corresponds to $f$. 
\begin{eg}
 The generalised cone construction has as a special case the cone construction; indeed we can 
just 
pick $P$ to consist only of singletons. However, this construction has more flexibility, for 
example 
if $G$ is connected and simple and $P$ just consists of a single vertex, the construction gives a 
2-complex 
with only two vertices 
such that $G$ is the link graph at both vertices. 
\end{eg}

\begin{lem}\label{reduce_to_looped_cone1}
Let $C$ be a 3-bounded 2-complex with a vertex $v$. 
If $C$ has no loop incident with $v$, then $C$ has a space minor that is a generalised cone whose 
link 
graph at the top is $L(v)$. 
\end{lem}

\begin{proof}
We obtain $C_1$ from $C$ by deleting all faces not incident with $v$. We obtain $C_2$ from $C_1$ by 
forgetting all incidences at the edges not incident with $v$. We obtain $C_3$ from $C_2$ by 
splitting 
all vertices different from $v$. It remains to prove the following. 
\begin{sublem}\label{is_cone}
 $C_3$ is a 
generalised cone over $L(v)$ with top $v$.
\end{sublem}
\begin{proof}
Let $w$ be a vertex of $C_3$ different from $v$. Since every face of $C_3$ has size two or three 
and is incident with $v$, there is an edge $e$ with endvertices $v$ and $w$. 
Let $P[w]$ be the set of those vertices $e'$ of $L(v)$ such that there is a path from $e$ to $e'$ 
all of whose edges are faces of size two in $C_3$ or else faces of size three in $C_3$ that contain 
a loop. By construction, every edge in $P[w]$ is 
incident with 
$w$. 
Any edge in the link graph $L(w)$ of $C_3$ with only one endvertex in $P[w]$ must be 
a face of $C_3$ of size three. Let $x$ be the endvertex of such an edge not in $P[w]$. Then $x$ is 
an edge of $C_3$ that is not incident with $v$; and thus is only incident with a single face; that 
is, $x$ has degree one in $L(w)$. Hence the connected component of $e$ is contained in $P[w]$ and 
these attached leaves. 
By our construction $L(w)$ is connected, so $P[w]$ is a connected subset of $L(w)$ and is equal to 
the set of edges of $C_3$ between $v$ 
and $w$.

It follows that $C_3$ is (isomorphic to) a 
generalised cone over the link graph $L(v)$ at the top $v$ with respect to the partition $(P[w]| 
w\in V(C_3)-v)$. 
\end{proof}
\end{proof}

\begin{lem}\label{gen_cone_deletion}
 Let $C$ be a generalised cone over a graph $L$, and let $f$ be an edge of $L$.
 Then there is a space minor of $C$ that is a generalised cone over $L-f$.
\end{lem}

\begin{proof}
We denote the top of $C$ by $v$. 
 We obtain $C_1$ from $C$ by deleting the face $f$. We obtain $C_2$ from $C_1$ by 
splitting\footnote{If the face $f$ has size three, this additional splitting is trivial and hence 
not necessary.} all vertices of $C_1$ that are incident with $f$ in $C$ except for $v$. It is 
straightforward to check that $C_2$ is a generalised cone over $L-f$.
\end{proof}

The following is immediate from the definition of generalised cones.

\begin{obs}\label{gen_cone_contr_outside}
 Let $C$ be a generalised cone and $e$ be an edge of $C$ not incident with the top.
 If $e$ is not a loop, then the space minor $C/e$ is a generalised cone (over the same graph).
\end{obs}

\begin{proof}
We denote the two endvertices of the edge $e$ by $x$ and $y$. The link graph $L(e)$ at $e$ in $C/e$ 
is obtained from the link graphs $L(x)$ and $L(y)$ of $C$ by gluing them together at the 
degree-one-vertex $e$, and then suppressing the vertex $e$. Thus the link graph $L(e)$ is connected. 
So $C/e$ is a generalised cone over the same graph as the generalised cone $C$.  
\end{proof}

\begin{lem}\label{gen_cone_contr}
 Let $C$ be a generalised cone over a graph $L$. 
 Assume $L$ contains an edge $f$ that has an endvertex $e$ of degree two.
 Then there is a space minor of $C$ that is (isomorphic to) a generalised cone over $L/f$.
\end{lem}

\begin{proof}
We denote the edge incident with the vertex $e$ aside from $f$ by $f'$. The edges $f$ and $f'$ are 
in series in the graph $L$.  
Now we consider $f$ and $f'$ as faces of $C$. 
If one of these faces has size two, we contract it and denote the resulting complex by $C'$. It is 
then straightforward to check that $C'$ is a generalised cone over $L/f$ or $L/f'$, 
respectively. As these two graphs are isomorphic, $C'$ is a generalised cone over $L/f$.

Hence we may assume that the two faces $f$ and $f'$ have size three. 
We denote the edge of $f$ not incident with the top $v$ of $C$ by $x$, and the edge of $f'$ not 
incident with $v$ by $x'$.

Recall that the edge $e$ is incident with the top $v$ of $C$. We denote the endvertex of $e$ 
aside from $v$ by $w$. In the link graph $L(w)$ at $w$, the vertex $e$ is only incident with the 
edges $f$ and $f'$, and the other endvertices of these edges have degree one. Hence the connected 
component 
of $L(w)$ containing $e$ is a path of length two. As $L(w)$ is connected by the definition of 
generalised cones, the link graph $L(w)$ must be equal to a path of length two with the vertices 
$e$, $x$ and $x'$. 
So the edge
$x$ is not a loop. 

Thus by \autoref{gen_cone_contr_outside} $C''=C/x$ 
is a generalised cone over $L$. In this generalised cone the face $f$ has degree two. Hence $C''/f$ 
is a generalised cone over the graph $L/f$. 
\end{proof}

\begin{lem}\label{gen_cone_subdiv}\label{gen_cone}
 Let $C$ be a generalised cone and $H$ be a subdivision of the link graph at the top that 
has no loops. 
Then $C$ 
has a space minor that is a generalised cone over $H$. 
\end{lem}

\begin{proof}
By \autoref{gen_cone_deletion}, there is a space 
minor $C_1$ of $C$ that is a generalised cone over a graph $L'$ so that $H$ can be obtained from 
$L'$ by suppressing vertices of degree two. By 
\autoref{gen_cone_contr}, there is a space minor 
$C_2$ of $C_1$ that is a generalised cone over the graph $H$.
\end{proof}

\vspace{.3 cm}

In the following we introduce `looped generalised cones' and prove for them analogues of 
\autoref{reduce_to_looped_cone1} and \autoref{gen_cone}. 

A \emph{looped generalised cone} is obtained from a generalised cone by attaching a loop at the top 
of the cone, adding some faces of size one only containing that loop and adding the incidence 
with the loop to some existing faces of size two. This is well-defined as all faces of a 
generalised cone are incident with the top. 
The following is proved analogously to \autoref{reduce_to_looped_cone1}\footnote{The statement 
analogue to \autoref{is_cone} is that `$C_3$ is a looped generalised cone over $L(v)$ with top 
$v$'. By the proof of that sublemma it follows that the 2-complex $C_3/e$, obtained from $C_3$ by 
contracting the loop $e$, is a generalised cone. Using the 
definition of looped generalised cone, it follows that $C_3$ is a 
looped generalised cone with the desired property. }.

\begin{lem}\label{reduce_to_looped_cone2}
Let $C$ be a 3-bounded 2-complex and let $v$ be a vertex. 
If $C$ has precisely one loop $e$ incident with $v$, then $C$ has a space minor 
that is a looped generalised cone whose link 
graph at the top is $L(v)$.  
\qed
\end{lem}

We prove the following analogue of \autoref{gen_cone} for looped generalised cones. 
Given a graph $L$ together with two specified vertices $v$ and $w$, a \emph{strict subdivision} of 
$(L,v,w)$ is obtained by successively deleting edges from $L$ or contracting edges that have an 
endvertex $x$ of degree two such that neither $x$ is equal to $v$ or $w$ nor $x$ is adjacent to $v$ 
or $w$. 
Given a looped generalised cone, we refer to the link graph at the top together with the two 
vertices of that link graph corresponding to the loop as the \emph{specific link graph at the top}.

\begin{lem}\label{loop_gen_cone}
Let $C$ be a looped generalised cone and let $(H,v,w)$ be a strict subdivision of the 
specific link graph at the top. Assume that $H$ has no loops. Then $C$ 
has a space minor that is a looped generalised cone such that $(H,v,w)$ is the specific link graph 
at the top. 
\end{lem}

\begin{proof}The proof of \autoref{loop_gen_cone} is analogous to that of \autoref{gen_cone_subdiv}.
Indeed, the analogous proof of that of \autoref{gen_cone_deletion} shows the following.

\begin{sublem}\label{gen_cone_deletion_loop}
 Let $C$ be a looped generalised cone over a graph $L$, and let $f$ be an edge of $L$.
 Then there is a space minor of $C$ that is a looped generalised cone over $L-f$.
\end{sublem}
 Similarly like \autoref{gen_cone_contr} we prove the following.
  
\begin{sublem}\label{gen_cone_contr_loop}
 Let $C$ be a looped generalised cone with $(L,v,w)$ as the specific graph at the top.
 Assume $L$ contains an edge $f$ that has an endvertex $e$ of degree two such that $e$ is neither 
equal to $v$ or $w$ nor adjacent to $v$ or $w$.
 Then there is a space minor of $C$ that is (isomorphic to) a looped generalised cone over $L/f$.
\end{sublem}
\begin{proof}
Take the proof of \autoref{gen_cone_contr} and replace `generalised cone' by `looped generalised 
cone'. The analogous statement of \autoref{gen_cone_contr_outside} for looped generalised cones is 
also 
true.
\end{proof}
Thus we can apply the proof of  \autoref{gen_cone_subdiv} to prove \autoref{loop_gen_cone}, where 
we refer to \autoref{gen_cone_deletion_loop} in place of \autoref{gen_cone_deletion} and to 
\autoref{gen_cone_contr_loop} in place of \autoref{gen_cone_contr}. 
\end{proof}

Let $\Zcal_1$ be the set of generalised cones over the graphs $K_5$ or $K_{3,3}$.
Let $\Zcal_2$ be the set of looped generalised cones such that some member of $\Ycal'$ is 
a strict marked graph associated to the link graph at the top. 
Let $\Zcal$ be the union of $\Zcal_1$ and $\Zcal_2$. 

\subsection{A Kuratowski theorem}

In this subsection we prove \autoref{kura_intro}. First we prove the following. 

\begin{thm}\label{rot_minor}
Let $C$ be a simply connected locally 3-connected 2-dimensional simplicial complex.
Then $C$ has a planar rotation system if and only if $C$ has no space minor from 
the finite list $\Zcal$.
\end{thm}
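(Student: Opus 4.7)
The plan is to prove the two directions of the equivalence separately, using the lemmas developed in the previous sections as building blocks.

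For the forward direction, I would appeal to Lemma~\ref{rot_closed_down}, which says that space minors preserve the existence of a planar rotation system. So it suffices to argue that no member of $\Zcal$ admits one. If $Z\in\Zcal_1$, then the link graph at the top of $Z$ is $K_5$ or $K_{3,3}$; any planar rotation system of $Z$ would induce a planar rotation system on this link graph, contradicting its non-planarity. If $Z\in\Zcal_2$, then by Lemma~\ref{loc_pl_summary_strict} the link graph at the top of $Z$ is not loop-planar, so $Z$ is not locally planar, and hence again has no planar rotation system.

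For the backward direction I would argue by contrapositive. Assuming $C$ has no planar rotation system, Lemma~\ref{rot_system_exists} produces a 2-complex $C'$ equal to $C$, to some $C/e$ with $e$ a non-loop edge, or to some $C/(o-e)$ with $o$ a non-loop chordless cycle, which fails to be locally planar; each of these is a space minor of $C$. Applying Lemma~\ref{loc_pl_summary_strict} to $C'$ yields a vertex $v$ of $C'$ whose link graph $L(v)$ is either (a)~non-planar, or (b)~planar but admits an associated strict marked graph with a strict marked minor in $\Ycal'$. In case~(a), Kuratowski's theorem supplies a subdivision $K$ of $K_5$ or $K_{3,3}$ inside $L(v)$; after preparatory space minor operations that remove all loops of $C'$ at $v$ while leaving $K$ intact in $L(v)$, Lemma~\ref{reduce_to_looped_cone1} gives a generalised cone with link graph $L(v)$ at the top, and Lemma~\ref{gen_cone_subdiv} (using \ref{gen_cone_deletion} and \ref{gen_cone_contr} internally) reduces this further to a generalised cone over $K_5$ or $K_{3,3}$, producing a space minor in $\Zcal_1$. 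In case~(b), some loop $\ell$ at $v$ witnesses the associated strict marked graph; after preparatory space minor operations that leave $\ell$ as the unique loop at $v$ while preserving the witnessing strict marked minor structure, Lemma~\ref{reduce_to_looped_cone2} produces a looped generalised cone with link graph $L(v)$ at the top, and Lemma~\ref{loop_gen_cone} then reduces its specific link graph at the top to match the marked structure of a member of $\Ycal'$, placing the resulting space minor in $\Zcal_2$.

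The hard part will be the preparatory loop-removal step in both cases. The helper lemmas~\ref{reduce_to_looped_cone1} and~\ref{reduce_to_looped_cone2} assume that $v$ has no loops or exactly one loop respectively, whereas $v$ in $C'$ may a priori carry many loops. Eliminating the unwanted loops requires a careful selection of face-deletions and edge-forgetting operations shrinking the face structure around each unwanted loop, without destroying either the Kuratowski subdivision $K\se L(v)$ in case~(a) or the specific triple of faces $(f_1,f_2,f_3)$ traversing $\ell$ that certifies the strict marked minor in~$\Ycal'$ in case~(b); a secondary task is to verify that the bijection between edges at the two ends of $\ell$ induced by the looped-generalised-cone structure is exactly the one demanded by the strict marked graph.
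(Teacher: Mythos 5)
Your overall architecture matches the paper's: the forward direction via \autoref{rot_closed_down} together with the non-existence of planar rotation systems for members of $\Zcal$, and the backward direction via \autoref{rot_system_exists} followed by the (looped) generalised cone machinery. However, the step you yourself flag as ``the hard part'' --- removing unwanted loops at $v$ before invoking \autoref{reduce_to_looped_cone1} or \autoref{reduce_to_looped_cone2} --- is a genuine gap, and the repair you sketch would not work as stated: deleting a loop $\ell$ at $v$ in the space-minor sense (forgetting its incidences) splits each of the two vertices of $L(v)$ corresponding to $\ell$ into several degree-one vertices, so it can destroy exactly the Kuratowski subdivision or the marked structure you need to preserve. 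The paper needs no preparatory loop removal at all, for two reasons you have not exploited. First, since $C$ is a simplicial complex (no loops, no parallel edges) and $o$ is chordless, the contraction $C/(o-e)$ has \emph{exactly one} loop, namely $e$, while $C$ and $C/e$ have none; so the hypotheses of \autoref{reduce_to_looped_cone1} and \autoref{reduce_to_looped_cone2} are satisfied automatically once the case distinction is organised correctly.

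Second, your case split (a)/(b) applied directly to the vertex $v$ of $C'$ leaves you stuck precisely when $C'=C/(o-e)$ and $L(v)$ is non-planar: there $v$ carries the loop $e$, so \autoref{reduce_to_looped_cone1} does not apply. The paper reroutes this subcase via \autoref{lem100}: writing $L(v)$ as the iterated vertex sum of the link graphs along $o-e$ and using \autoref{locally_at_edge} and \autoref{sum_planar1}, it shows that if $L(v)$ were non-planar then already some single contraction $C/e'$ with $e'\in o-e$ would have a non-planar link graph --- a loop-free situation covered by the $\Zcal_1$ case. Hence in the $C/(o-e)$ case one may assume $L(v)$ is planar but not loop-planar, and the unique loop $e$ together with \autoref{sum_3con}, \autoref{loop_to_marked} and \autoref{loc_pl_summary_strict} feeds directly into \autoref{reduce_to_looped_cone2} and \autoref{loop_gen_cone}. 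Replace your loop-removal step by these two observations; without them the argument does not close.
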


\begin{proof}
If $C$ has a planar rotation system, it cannot have a space minor in $\Zcal$. Indeed, every complex 
$Z$ in $\Zcal$ has a link graph that is not loop planar. Hence no $Z$ in $\Zcal$ has a planar 
rotation system by \autoref{planar_rot_TO_loc_planar}. Since by \autoref{rot_closed_down} the class 
of  2-complexes with planar rotation systems is closed under space minors, $C$ cannot have a space 
minor in $\Zcal$. 

Now conversely assume that the simplicial complex $C$ has no space minor in $\Zcal$. Suppose for a 
contradiction that $C$ has no planar rotation system. Then by \autoref{rot_system_exists}, 
there is a 3-bounded space minor $C'$ that is not locally planar, where $C'$ is either $C$, or for 
some (non-loop) edge $e$ the contraction $C/e$ or there is 
a (non-loop) chordless 
cycle $o$ of 
$C$ and some $e\in o$ such that $C'=C/(o-e)$. 
We distinguish two cases.

{\bf Case 1:} $C$ or $C/e$ are not locally planar. 
Since $C$ has no parallel edges or loops by assumption, here $C'$ has no loop. 
Hence $C'$ has a vertex $v$ such that the link graph $L(v)$ at $v$ is not planar. 
By \autoref{reduce_to_looped_cone1} $C'$ has a space minor that is a generalised cone such that the 
link graph at the top is $L(v)$. By Kuratowski's theorem, $L(v)$ has a subdivision isomorphic to 
$K_5$ or 
$K_{3,3}$. So by \autoref{gen_cone} $C'$ has a space minor that is a generalised cone over $K_5$ or 
$K_{3,3}$. So $C$ has a space minor in $\Zcal_1$, which is the desired contradiction. 

{\bf Case 2:} Not Case 1. So $C'=C/(o-e)$. 
Let $v$ be the vertex of $C'$ corresponding to $o-e$. 
Since we are not in Case 1, all link graphs at vertices of $C$ are loop planar. In particular, 
it must be the link graph at $v$ that is not loop planar. 
\begin{sublem}\label{lem100}
 If the link graph $L(v)$ of $C'$ is not planar, there is an edge $e'\in o-e$ such that the link 
graph at 
$e'$ in $C/e'$ is not planar. 
\end{sublem}
\begin{proof}
We prove the contrapositive. So assume that for every edge $e'\in o-e$ the link graph at $e'$ of 
$C/e'$ is planar. Since $C$ is locally 3-connected, the planar rotation systems of the link graphs 
$L(w)$ at the vertices $w$ of $o$ are unique up to reversing. By \autoref{locally_at_edge} these 
rotation systems are reverse or agree at any rotator of a vertex in $o-e$.

Note that $L(v)$ is the vertex sum of the link graphs $L(w)$ along the set $o-e$ of gluing 
vertices.
Thus by reversing some of these rotation systems if necessary, we can apply \autoref{sum_planar1} 
to build a planar rotation system of $L(v)$. In particular, $L(v)$ is planar.
\end{proof}

By \autoref{lem100} and since we are not in Case 1, the link graph $L(v)$ is planar but not loop 
planar.

Since $C$ has no loops and parallel edges and $o$ is chordless, in this case $C'$ can only have the 
loop $e$. 
Thus by \autoref{reduce_to_looped_cone2} $C'$ has a space minor $C''$ that is a looped generalised 
cone 
such that the link graph at the top is $L(v)$. 

Since $C$ is locally 3-connected by 
assumption and by \autoref{sum_3con} the link graph $L(v)$ is 3-connected. So by 
\autoref{loop_to_marked} there is a marked graph $\hat 
G$ associated to $L(v)$ that is not planar. Let $G'$ be a strict marked graph associated to $L(v)$ 
with marked graph $\hat G$. By \autoref{loc_pl_summary_strict} $G'$ has a strict 
marked minor $\hat Y=(Y, x,z, (a_i,b_i)|i=1,2,3; \iota)$ in $\Ycal'$, where $x$ and $z$ are the 
vertices in $L(v)$ 
corresponding to the loop at $v$. 
By the definition of strict subdivision, we have that $(Y, x,z)$ is a strict subdivision of the 
specific link graph $(L(v),x,z)$ at the top of $C''$. 
So by \autoref{loop_gen_cone} $C'$ has a space minor that is a looped generalised cone such that 
$\hat Y$ is a strict marked graph associated to the top. 
So $C$ has a space minor in $\Zcal_2$, which is the desired contradiction. 
\end{proof}

\begin{proof}[Proof of \autoref{kura_intro}.]
By \autoref{emb_to_rot} a simply connected simplicial complex is 
embeddable in \Sthree\ 
if and only if it has a planar rotation system. So \autoref{kura_intro} is implied by 
\autoref{rot_minor}. 
\end{proof}

\begin{proof}[Proof of \autoref{kura_intro_hom}.]   
 By a theorem of \cite{3space2}, \cite[\autoref*{combi_intro_extended}]{3space2} to be precise, a 
simplicial complex 
with $H_1(C,\Fbb_p)=0$ is embeddable if and 
only if it is simply connected and it has a planar rotation system. So \autoref{kura_intro_hom} is 
implied by 
\autoref{rot_minor}. 
\end{proof}

\section{Concluding remarks}

The proof of \autoref{kura_intro} yields that quite a few properties are equivalent. This is 
summarised in the following. 

\begin{thm}\label{main_detailed2}
  Let $C$ be a simply connected locally 3-connected 2-dimensional simplicial complex. The following 
are equivalent. 
\begin{enumerate}
 \item $C$ has an embedding in the 3-sphere;
 \item $C$ has an embedding in some oriented 3-manifold;
 \item  $C$ has a planar rotation system;
 \item  all contractions of $C$ are locally planar;
 \item no contraction has a link graph that has $K_5$ or $K_{3,3}$ as a minor or a marked minor of 
the 12 marked graphs in the list $\Ycal$ defined in \autoref{sec:marked};
 \item $C$ has no space minor from the finite list $\Zcal$ defined in \autoref{sec:gen_cone}.
\end{enumerate}
\end{thm}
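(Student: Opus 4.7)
The plan is to chain together the equivalences (1), (3), (4), (5), (6) directly from results already proved in this paper, and then slot (2) into the cycle via two implications: the trivial (1) $\Rightarrow$ (2) and a short topological observation (2) $\Rightarrow$ (3). The backbone I would establish is (1) $\Rightarrow$ (2) $\Rightarrow$ (3) $\Rightarrow$ (1), with (3) identified separately with (4), (5) and (6).

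For the backbone, (1) $\Leftrightarrow$ (3) is exactly \autoref{emb_to_rot}, which uses that $C$ is simply connected. The equivalence (3) $\Leftrightarrow$ (4) is \autoref{planar_rot_TO_loc_planar}, using that $C$ is locally 3-connected. For (4) $\Leftrightarrow$ (5), I would apply \autoref{loc_pl_summary} to each contraction of $C$: local planarity of a locally 3-connected 2-complex is equivalent to all link graphs being planar (which, by Kuratowski's theorem, is the same as avoiding $K_5$ and $K_{3,3}$ as minors) together with all associated marked graphs avoiding marked minors from $\Ycal$. That contractions of locally 3-connected 2-complexes remain locally 3-connected is a direct consequence of \autoref{sum_3con} combined with \autoref{obs1}, so \autoref{loc_pl_summary} genuinely applies to every contraction. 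Finally, (3) $\Leftrightarrow$ (6) is \autoref{rot_minor}.

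It remains to integrate (2). The implication (1) $\Rightarrow$ (2) is immediate since $\Sbb^3$ is itself an oriented 3-manifold. For (2) $\Rightarrow$ (3), I would argue topologically: given an embedding of $C$ into an oriented 3-manifold $M$, at each vertex $v$ pick a sufficiently small open ball $B \subseteq M$ around $v$; then $B \cap C$ is a cone over a graph topologically embedded in $\partial B \cong \Sbb^2$ and isomorphic to the link graph $L(v)$. This planar embedding of $L(v)$ defines a rotator $\sigma_v$, and the fixed orientation of $M$ ensures that for each edge of $C$ the two rotators assigned at its endpoints are compatible in the sense demanded by the definition of a rotation system. The family $(\sigma_v \mid v \in V(C))$ is therefore a planar rotation system of $C$. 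This is the one step that requires input beyond the cited lemmas, and it is the main obstacle here; it is essentially the oriented-3-manifold analogue of the easy direction of \autoref{emb_to_rot} and does not need simple connectedness. With (2) $\Rightarrow$ (3) in hand, the cycle closes and all six conditions are equivalent.
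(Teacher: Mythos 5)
Your proposal is correct, and its backbone (chaining \autoref{emb_to_rot}, \autoref{planar_rot_TO_loc_planar} and \autoref{rot_minor}) matches the paper's proof; it differs in two places. First, for item 5 the paper simply cites \autoref{weak_kura} (the equivalence of 1 with the single-vertex contraction being unobstructed), whereas you derive $4\Leftrightarrow 5$ by applying \autoref{loc_pl_summary} to every contraction, using \autoref{sum_3con} and \autoref{obs1} to keep local 3-connectivity; your route is arguably better matched to the literal wording of item 5, which quantifies over all contractions. Second, and more substantially, the paper outsources the equivalence of 1, 2 and 3 entirely to the companion paper \cite{3space2}, whereas you supply the one nontrivial implication $2\Rightarrow 3$ yourself. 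Your topological idea there is the right one, but note a definitional slip: a rotation system of a 2-complex is a family $(\sigma_e\mid e\in E(C))$ of cyclic orientations of the \emph{faces around each edge}, not a family of rotators at the vertices of $C$. The argument should therefore read: the orientation of $M$ together with the chosen direction of $e$ cyclically orders the faces incident with $e$, giving $\sigma_e$; a small sphere around each vertex $v$ then meets $C$ in an embedded copy of $L(v)$ whose induced rotation system is the one coming from that spherical embedding, hence planar. One should also justify that such "small regular neighbourhoods" exist for a topological embedding (e.g.\ by passing to a piecewise-linear one); this is exactly the kind of detail the paper delegates to \cite{3space2}. With that correction your proof closes the cycle $1\Rightarrow 2\Rightarrow 3\Rightarrow 1$ and is self-contained up to the cited lemmas, which is a modest gain in independence over the paper's citation-based treatment of item 2.
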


\begin{proof}
The equivalence between 1, 2 and 3 is proved in \cite{3space2}.  The equivalence between 3 and 4 is 
proved in \autoref{planar_rot_TO_loc_planar}. The equivalence between 1 and 5 is 
\autoref{weak_kura}. Finally, the equivalence between 1 and 6 is \autoref{kura_intro}.  
\end{proof}

\autoref{kura_intro_hom} is a structural characterisation of which locally 3-connected 
2-dimensional simplicial complex $C$ whose first homology group is trivial embed in 3-space. Does 
this have algorithmic consequences? 
The methods of this paper give an algorithm that checks in linear\footnote{Linear in the 
number of faces of $C$.} time  whether a locally 3-connected 2-dimensional simplicial complex 
has a planar rotation system. For general 2-dimensional simplicial complex we obtain a quadratic 
algorithm, see \cite{3space5} for details. But how easy is it to check 
whether 
$C$ is simply connected? For simplicial complexes in general this is not decidable; indeed for 
every finite presentation of a group one can build a 2-dimensional simplicial complex that has that 
fundamental group. 
However, for simplicial complexes that embed in some (oriented) 3-manifold; that is, that have a 
planar 
rotation system, this problem is known as the sphere recognition problem. 
Recently it was shown that sphere recognition lies in NP \cite{{Iva01},{Sch11}} and co-NP 
assuming the generalised Riemann hypothesis \cite{{HeuZen16},{Zen16}}. It is an open question 
whether there is a polynomial time algorithm.

\section{Acknowledgement}

I thank Nathan Bowler and Reinhard Diestel for useful discussions on this topic.

\bibliographystyle{plain}
\bibliography{literatur}

\end{document}